\documentclass[a4paper,12pt]{amsart}
\usepackage{amsmath,amsfonts,amssymb}
\usepackage{amsthm}
\usepackage{amstext,amscd,mathabx,mathdots}
\usepackage{graphics,graphicx,epsf}
\usepackage[usenames]{color}
\usepackage{hyperref}
\usepackage{tikz}
\usepackage{xcolor}
\usepackage{color}
\usepackage{float}
\usetikzlibrary{arrows}

\setlength{\topmargin}{+0.1in} \setlength{\textwidth}{6.5in}
\setlength{\textheight}{8.5in} \setlength{\oddsidemargin}{-0.2cm}
\setlength{\evensidemargin}{-0.2cm}

\newtheorem{theorem}{Theorem}[section]
\newtheorem{definition}[theorem]{Definition}

\newtheorem{lemma}[theorem]{Lemma}
\newtheorem{corollary}[theorem]{Corollary}
\newtheorem{remark}[theorem]{Remark}

\renewcommand{\proof}{{\noindent \bf Proof:\ }}

\linespread{1.05}

\numberwithin{equation}{section}

\title[Fractional powers approach of operators]{Fractional powers approach of operators for higher order abstract Cauchy problems} 

\author[F. D. M. Bezerra]{Flank D. M. Bezerra}
\address[F. D. M. Bezerra]{Departamento de Matem\'atica, Universidade Federal da Para\'iba, 58051-900 Jo\~ao Pessoa PB, Brazil.}
\email{flank@mat.ufpb.br}

\author[L. A. Santos]{Lucas A. Santos}
\address[L. A. Santos]{Departamento de Matem\'atica, Universidade Federal da Para\'iba, 58051-900 Jo\~ao Pessoa PB, Brazil.}
\email{lucas92mat@gmail.com}

\date{\today}

\begin{document}

\maketitle

\begin{abstract}
In this paper we explore the theory of fractional powers of non-negative (and not necessarily self-adjoint) operators and its amazing relationship with the Chebyshev polynomials of the second kind to obtain results of existence, regularity and behavior asymptotic of solutions for linear abstract evolution equations of $n$-th order in time, where $n\geqslant3$. We also prove generalizations of classical results on structural damping for linear systems of differential equations. 

\vskip .1 in \noindent {\it Mathematics Subject Classification 2010}: 
\newline {\it Key words and phrases:} sectorial operator; fractional powers; analytic semigroup; Chebyshev polynomials.

\end{abstract}

\tableofcontents

\section{Introduction} 

In this paper we consider the  following abstract linear evolution equation of $n$-th order in time 
\begin{equation}\label{Eq1}
\dfrac{d^nu}{dt^n} +Au = 0,\quad t>0,
\end{equation}
with initial conditions given by
\begin{equation}\label{Eq1ma}
u(0)=u_0\in X^{\frac{n-1}{n}},\ \dfrac{du}{dt}(0)=u_1\in X^{\frac{n-2}{n}},\ \dfrac{d^2u}{dt^2}(0)=u_2\in X^{\frac{n-3}{n}},\ldots,\ \dfrac{d^{n-1}u}{dt^{n-1}}(0)=u_{n-1}\in X, 
\end{equation}
that is,
\begin{equation}\label{Eq2}
\dfrac{d^iu}{dt^i}(0)=u_i\in X^{\frac{n-(i+1)}{n}},\quad i\in\{0,1,\ldots, n-1\},\ n\geqslant3,
\end{equation}
where $X$ be a separable Hilbert space and $A:D(A)\subset X\to X$ be an unbounded linear, closed, densely defined, self-adjoint and positive definite operator. Choose $\lambda_0>0$ such that $\mbox{Re}\sigma(A)>\lambda_0$, that is,   $\mbox{Re}\lambda>\lambda_0$ for all $\lambda\in\sigma(A)$, where $\sigma(A)$ denotes is the spectrum of $A$, and therefore, $A$ is a sectorial operator in the sense of Henry \cite[Definition 1.3.1]{H}. This allows us to define the fractional power $A^{\alpha}$ of order $0<\alpha<1$ according to Amann \cite{A} and Henry \cite{H}, as a closed linear operator, which can be given by
\begin{equation}\label{FracPower}
A^{\alpha} = \frac{\sin(\alpha \pi)}{\pi}\int_{0}^{\infty} \lambda^{\alpha-1} A(\lambda I+A)^{-1}d\lambda,
\end{equation}
see e.g. Hasse \cite{Has}, Henry \cite{H}, Krein \cite{Kr} and Mart\'inez and Sanz \cite{MS}.

Denote by $X^{\alpha}=D(A^{\alpha})$ for $0\leqslant\alpha\leqslant1$ (taking $A^0:=I$ on $X^0:=X$ when $\alpha=0$). Recall that $X^{\alpha}$ is dense in $X$  for all $0\leqslant\alpha\leqslant1$, for details see Amann \cite[Theorem 4.6.5]{A}. The fractional power space $X^\alpha$ endowed with the norm 
\[
\|\cdot\|_{X^\alpha}:=\|A^{\alpha} \cdot\|_X
\] 
is a Banach space. It is not difficult to show that $A^{\alpha}$ is the generator of a strongly continuous analytic
semigroup on $X$, that we will denote by $\{e^{-tA^{\alpha}}:  t\geqslant 0\}$, see Kre\v{\i}n \cite{Kr} and  Tanabe \cite{Ta} for any $\alpha\in[0,1]$. With this notation, we have $X^{-\alpha}=(X^\alpha)'$ for all $\alpha>0$, see Amann \cite{A}, Sobolevski\u{\i} \cite{S} and Triebel \cite{T} for the characterization of the negative scale. 

Fractional powers approach of operators for Cauchy problems associated with evolutionary equations has been studied in some situations, in the sense of existence, regularity and behavior asymptotic of solutions for theses problems, see e.g., Bezerra, Carvalho, Cholewa, and  Nascimento \cite{BCCN}, Bezerra, Carvalho, and  Nascimento \cite{BCN}, Bezerra and  Nascimento \cite{BN}, our previous paper Bezerra and Santos \cite{BS}, and references therein.

The problem \eqref{Eq1}-\eqref{Eq1ma} was studied by  Fattorini \cite{HOF}, and thanks to Fattorini this problem is well-posed if the following two conditions are satisfied: $(i)$  there is a dense space $D$ of $X$ such that for any $u_0,u_1,\ldots, u_{n-1}\in D$, there exists a unique classical solution in $X$; $(ii)$ the continuous dependence of the solution $u(t)$ on the initial data uniformly for $t$ in any given bounded interval holds. In \cite{HOF} it is proved that the Cauchy problem with $n\geqslant3$ is well-posed if and only if $A$ is a bounded linear operator on $X$.

We also recall that if $n=1$ then \eqref{Eq1}-\eqref{Eq1ma} is a linear parabolic problem, and it is well known that this Cauchy problem is well-posed (in the sense of Hadamard) via theory of analytic semigroups, see e.g. Amann \cite{A}, Cholewa, and T. D\l otko \cite{ChD}, Henry \cite{H} and Sobolevski\u{\i} \cite{S}. If $n=2$ then \eqref{Eq1}-\eqref{Eq1ma} is a linear hyperbolic problem, and it is well known that this Cauchy problem is well-posed (in the sense of Hadamard) via theory of strongly continuous semigroups; more precisely, via theory of strongly continuous (semi)groups of unitary operators, see e.g. Pazy \cite{P}. 

In \cite{BS} we study approximations for a class of linear evolution equations of third order in time governed by fractional powers of an operator.  We explicitly calculate the fractional powers of matrix-valued operators associated with evolution equations of third order in time, and we characterize the partial scale of the fractional power of order spaces associated with these operators.

In this paper we study the problem \eqref{Eq1}-\eqref{Eq1ma} under point of view of the theory of strongly continuous semigroups of bounded linear operators in $X$  and its amazing relationship with the  Chebyshev polynomials of the second kind, see e.g., Amann \cite{A}, Henry \cite{H}, Kre\v{\i}n \cite{Kr}, Pazy \cite{P}, Sobolevski\u{\i} \cite{S}, Tanabe \cite{Ta} and Triebel \cite{T}.

To our best knowledge, there is not fractional powers approach of operators for evolution equations of $n$-th order in time  with $n\geqslant4$. Here, we continue the analysis done in the previously cited works.  To better present our results in this paper, we introduce some notations and terminologies. 

We will rewrite \eqref{Eq1}-\eqref{Eq2} as a first order abstract system, we will calculate the fractional powers of order $\alpha\in(0,1)$ of the main part of the differential system and we are interested in finding for which values of  $\alpha\in [0,1]$ the fractional first order system is well-posed in some sense; namely, we will consider the phase space 
\[
Y=X^{\frac{n-1}{n}}\times X^{\frac{n-2}{n}}\times X^{\frac{n-3}{n}}\times\cdots\times X
\] 
which is a Banach space equipped with the norm given by
\[
\|\cdot\|_Y^2=\|\cdot\|^2_{X^{\frac{n-1}{n}}}+\|\cdot\|^2_{X^{\frac{n-2}{n}}}+\|\cdot\|^2_{X^{\frac{n-3}{n}}}+\cdots+\|\cdot\|^2_{X}.
\]

We can write the problem \eqref{Eq1}-\eqref{Eq2} as a Cauchy problem on $Y$, letting $v_1=u$, $v_2=\frac{du}{dt}$, $v_3=\frac{d^2u}{dt^2}$, \ldots, $v_n=\frac{d^{n-1}u}{dt^{n-1}}$,  
\[
{\bf u}=\left[\begin{smallmatrix} v_1\\ v_2 \\ v_3\\ \vdots \\ v_n \end{smallmatrix}\right]\quad\mbox{and}\quad {\bf u}_0=\left[\begin{smallmatrix} u\\ u_1 \\ u_2\\ \vdots \\ u_{n-1} \end{smallmatrix}\right],
\] 
and the initial value problem
\begin{equation}\label{mainprob}
\begin{cases}
\dfrac{d {\bf u}}{dt}+ \varLambda_n {\bf u} = 0,\ t>0,\\
{\bf u}(0)={\bf u}_0,
\end{cases}
\end{equation}
where the unbounded linear operator $\varLambda_n :D(\varLambda_n)\subset Y \to Y$ is defined by
\begin{equation}\label{LamOp1}
D(\varLambda_n)=X^1\times X^{\frac{n-1}{n}}\times X^{\frac{n-2}{n}}\times\cdots\times X^{\frac{1}{n}},
\end{equation}
equipped with the norm given by
\[
\|\cdot\|^2=\|\cdot\|^2_{X^1}+\|\cdot\|^2_{X^{\frac{n-1}{n}}}+\|\cdot\|^2_{X^{\frac{n-2}{n}}}+\cdots+\|\cdot\|^2_{X^{\frac{1}{n}}}.
\]
and
\begin{equation}\label{LamOp2}
\varLambda_n {\bf u}= \left[\begin{smallmatrix}
0 & -I & 0 & \cdots & 0 & 0\\0 & 0 & -I & \cdots & 0  & 0\\ 0 & 0 & 0 & \cdots & 0 & 0\\ \vdots & \vdots & \vdots & \ddots & \vdots & \vdots\\
0 & 0  & 0 & \cdots & 0 & -I\\
 A & 0 & 0 & \cdots & 0 & 0
\end{smallmatrix}\right] {\left[\begin{smallmatrix} v_1\\ v_2 \\ v_3\\ \vdots \\ v_{n-1}\\ v_n \end{smallmatrix}\right]}:= {\left[\begin{smallmatrix} -v_2\\ -v_3 \\ -v_4\\ \vdots \\ -v_n \\ Av_1 \end{smallmatrix}\right]},\ \forall {\bf u}={\left[\begin{smallmatrix} v_1\\ v_2 \\ v_3\\ \vdots \\ v_n \end{smallmatrix}\right]}\in D(\varLambda_n).
\end{equation}

From now on, we denote 
\[
Y^1=D(\varLambda_n)=X^1\times X^{\frac{n-1}{n}}\times X^{\frac{n-2}{n}}\times\cdots\times X^{\frac{1}{n}},
\]
equipped with the norm
\[
\|\cdot\|_{Y^1}^2=\|\cdot\|^2_{X^1}+\|\cdot\|^2_{X^{\frac{n-1}{n}}}+\|\cdot\|^2_{X^{\frac{n-2}{n}}}+\cdots+\|\cdot\|^2_{X^{\frac{1}{n}}}.
\]

Summaries of our main results and the structure of the paper are as follows. In Section \ref{S2} entitled ‘The fractional powers of the operators' we recall fundamental aspects of the theory of fractional powers of non-negative  (and not necessarily self-adjoint) operators, which have important links with partial differential equations.  Moreover, under Balakrishnan formula, see \cite{Ba}, we explicitly calculate the fractional powers of the operator $\varLambda_n$ for $0<\alpha<1$, and we study spectral properties of the fractional power operators $\varLambda_n^\alpha$, as well as, we characterize the partial scale of the fractional power spaces associated with these operators. Namely, we prove the following theorems.

\begin{theorem}\label{Theorem_1a}
Let $\varLambda_n$ be the unbounded linear operator  defined in \eqref{LamOp1}-\eqref{LamOp2}. Then $-\varLambda_n$ is not the infinitesimal generator of a strongly continuous semigroup on $Y$.
\end{theorem}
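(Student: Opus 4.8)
The plan is to argue by contradiction using the Hille--Yosida characterization (or the Lumer--Phillips philosophy) together with the explicit structure of $\varLambda_n$: if $-\varLambda_n$ generated a $C_0$-semigroup $\{e^{-t\varLambda_n}\}$ on $Y$, then the first coordinate of the solution of \eqref{mainprob} would solve \eqref{Eq1}, so we would have produced a well-posed solution operator for the $n$-th order Cauchy problem with $n\geqslant 3$. By Fattorini's theorem \cite{HOF}, recalled in the introduction, that is impossible unless $A$ is bounded on $X$, and $A$ is unbounded by hypothesis. To make this airtight I would spell out the equivalence between mild/classical solutions of \eqref{mainprob} and solutions of \eqref{Eq1}--\eqref{Eq2}: given a semigroup, the orbit $t\mapsto e^{-t\varLambda_n}{\bf u}_0$ with ${\bf u}_0\in Y^1$ is a $C^1$ solution of \eqref{mainprob}, its components satisfy $v_{i+1}=\frac{dv_i}{dt}$ inductively, hence $v_1\in C^n$ solves \eqref{Eq1} with the prescribed initial data; conversely strong continuity of the semigroup gives exactly the continuous dependence in condition $(ii)$ of Fattorini's well-posedness notion, and density of $Y^1$ in $Y$ supplies the dense set $D$ in condition $(i)$. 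This contradiction proves the theorem.

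Alternatively, and perhaps more self-containedly, I would give a direct spectral argument. Since $A$ is self-adjoint and positive definite, fix $\mu\in\sigma(A)$ with $\mu>\lambda_0$ and an approximate eigenvector $\phi\in D(A)$, $\|\phi\|_X=1$, $\|A\phi-\mu\phi\|_X$ small (an honest eigenvector if $\mu$ is an eigenvalue). One checks that the characteristic equation for $\varLambda_n$ acting on vectors of the form ${\bf u}=(\phi,\beta_1\phi,\ldots,\beta_{n-1}\phi)^{\mathrm T}$ is $(-1)^n\beta^n = \text{(something)}$, i.e. the eigenvalues of $\varLambda_n$ ``above'' $\mu$ are the $n$-th roots $\zeta$ of $\mu$ (up to sign/orientation), and for $n\geqslant 3$ at least one such root has $\mathrm{Re}\,\zeta<0$, in fact $\mathrm{Re}\,(-\zeta)\to+\infty$ as $\mu\to\infty$ along $\sigma(A)$ (which is unbounded since $A$ is unbounded). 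Hence $-\varLambda_n$ has spectrum with real part tending to $+\infty$, so no half-plane $\{\mathrm{Re}\,z>\omega\}$ is contained in the resolvent set; this violates the necessary spectral condition for a $C_0$-semigroup generator (the spectrum of a generator lies in a left half-plane $\{\mathrm{Re}\,z\leqslant\omega\}$). One must be a little careful when $\mu$ is only in the essential spectrum and not an eigenvalue, but then the approximate eigenvectors show $\zeta\in\sigma(\varLambda_n)$ still, which is all that is needed.

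The main obstacle is bookkeeping rather than conceptual: correctly identifying the eigenvalues of $\varLambda_n$ via its companion-matrix structure and tracking the $n$-th roots of elements of $\sigma(A)$ to see that, precisely because $n\geqslant 3$, one of these roots sits strictly in the open left half-plane and escapes to $-\infty$ as we run through the unbounded spectrum of $A$. The $n=1,2$ cases are genuinely different (all relevant roots sit on the imaginary axis or on the negative real axis in a controlled way), which is why the hypothesis $n\geqslant 3$ is essential; the proof should make visible where that hypothesis enters. I would present the Fattorini-based argument as the short proof and relegate the spectral computation — which will be reused later when computing $\varLambda_n^\alpha$ and studying $\sigma(\varLambda_n^\alpha)$ — to the surrounding discussion, so that Theorem \ref{Theorem_1a} gets a clean two-line proof citing \cite{HOF}.
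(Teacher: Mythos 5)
Your second, ``self-contained'' spectral argument is essentially the paper's own proof: the authors also show that the point spectrum of $-\varLambda_n$ consists of the $n$-th roots of $\sigma_p(-A)=\{-\mu_j\}$, observe that for $n\geqslant 3$ one of these roots has real part $\mu_j^{1/n}\cos(\pi/n)\to+\infty$, and conclude by the necessary condition $\{\mathrm{Re}\,\lambda>\omega\}\subset\rho(-\varLambda_n)$ from Pazy \cite[Remark 1.5.4]{P}. Your bookkeeping is consistent with theirs ($\lambda^n=-\mu_j$ for the eigenvalues $\lambda$ of $-\varLambda_n$), and your remark that $n=1,2$ escape because the relevant roots lie on the negative real axis or the imaginary axis is exactly where the hypothesis $n\geqslant 3$ enters. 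Your worry about $\mu$ lying only in the essential spectrum is legitimate (the paper quietly assumes a pure point spectrum before the compact-resolvent hypothesis is officially imposed), and is most cleanly resolved not by Weyl sequences but by Lemma \ref{resoln}, which gives $\sigma(-\varLambda_n)=\{\lambda:\lambda^n\in\sigma(-A)\}$ for the full spectrum.

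The route you propose to feature as the ``clean two-line proof'' --- deducing a contradiction with Fattorini's theorem --- has a genuine gap as written. A $C_0$-semigroup generated by $-\varLambda_n$ on $Y=X^{\frac{n-1}{n}}\times\cdots\times X$ yields continuous dependence of $u(t)=v_1(t)$ on the initial data measured in the $Y$-norm, i.e.\ with $u_0$ controlled in $X^{\frac{n-1}{n}}$, $u_1$ in $X^{\frac{n-2}{n}}$, and so on. Fattorini's condition $(ii)$, as recalled in the introduction, requires continuous dependence when the data are small merely in $X$; since $A$ is unbounded, the $Y$-norm is strictly stronger on each component, so strong continuity on $Y$ does \emph{not} ``give exactly'' condition $(ii)$, and no contradiction with \cite{HOF} follows without invoking a sharper version of Fattorini's result (one ruling out well-posedness in any weaker phase space continuously embedded in $X^n$). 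Either supply such a statement with a precise citation, or promote your spectral argument to be the proof --- which is what the paper does.
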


\begin{theorem}\label{lem:useful:properties}
Let $\varLambda_n$ be the unbounded linear operator  defined in \eqref{LamOp1}-\eqref{LamOp2}. Then we have all the following.
\begin{itemize}
\item[$i)$] $0\in \rho(\varLambda_n)$ and 
\[
\varLambda_n^{-1}=\left[\begin{smallmatrix}
0 & 0 & 0 & \cdots & 0 & A^{-1}\\
-I & 0 & 0 & \cdots & 0  & 0\\ 
0 & -I & 0 & \cdots & 0 & 0\\
 \vdots & \vdots & \vdots & \ddots & \vdots & \vdots\\
0 & 0  & 0 & \cdots & 0 & 0\\
 0 & 0 & 0 & \cdots & -I & 0
\end{smallmatrix}\right].
\]
Moreover, if $A$ has compact resolvent on $X$, then $\varLambda_n$ has compact resolvent on $Y$;
\item[$ii)$] Fractional powers $\varLambda_n^\alpha$ can be defined for $0<\alpha<1$ by
\begin{equation}\label{eq:to-compute-fractional-powers}
\varLambda_n^{\alpha}= \dfrac{\sin(\alpha \pi)}{\pi}\int_0^\infty\lambda^{\alpha-1}\varLambda_n(\lambda I+\varLambda_n)^{-1}d\lambda;
\end{equation}
\item[$iii)$] Given any  $0\leqslant\alpha\leqslant1$ we have the unbounded linear operator $\varLambda_n^{\alpha}:D(\varLambda_n^{\alpha})\subset Y\to Y$ defined by
\begin{equation}\label{asas324}
D(\varLambda_n^{\alpha})=X^{\frac{\alpha+n-1}{3}}\times X^{\frac{\alpha+n-2}{3}}\times X^{\frac{\alpha+n-3}{3}}\times\cdots\times X^{\frac{\alpha}{3}},
\end{equation}
and
\begin{equation}\label{pota}
\varLambda_n^{\alpha}= \left[\frac{(-1)^{i-j}}{n}U_{n-1}\left(cos\left(\frac{(\alpha+i-j)\pi}{n}\right)\right)A^{\frac{\alpha+i-j}{n}}\right]_{ij},
\end{equation}
where $U_n:\mathbb{C}\to\mathbb{C}$ is the $n$th degree Chebyshev polynomial of second kind defined by the recurrence relation
\begin{itemize}
\item[] $U_0(x)=1$,
\item[] $U_1(x)=2x$,
\item[] $U_2(x)=4x^2-1$,
\item[] $U_3(x)=8x^3-4x$,
\item[] $U_{n+1}(x)= 2xU_n(x)-U_{n-1}(x)$, 
\end{itemize}
for all $x\in\mathbb{C}$ and  $n\geqslant 3$. 
\end{itemize}
\end{theorem}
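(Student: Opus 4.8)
The plan is to settle $i)$ by a direct verification and then to extract $ii)$ and $iii)$ from one explicit resolvent formula together with the Balakrishnan calculus for non-negative operators recalled in Section~\ref{S2}. For $i)$: since $A$ is positive definite we have $0\in\rho(A)$ and $A^{-1}\in\mathcal{L}(X)$, so it suffices to check that the displayed block matrix, call it $B$, inverts $\varLambda_n$. Reading $B$ column by column one sees that $B$ maps $Y=X^{\frac{n-1}{n}}\times\cdots\times X$ boundedly into $Y^1=X^1\times X^{\frac{n-1}{n}}\times\cdots\times X^{\frac1n}$: the single nontrivial entry $A^{-1}$ sends $X$ into $X^1$, and the remaining $-I$'s realise the one-step shift of the scale. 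A block multiplication then gives $\varLambda_n B=I_Y$ and $B\varLambda_n=I_{Y^1}$, whence $0\in\rho(\varLambda_n)$ and $\varLambda_n^{-1}=B$. If moreover $A$ has compact resolvent, then every embedding $X^\beta\hookrightarrow X^\gamma$ with $\beta>\gamma$ is compact, so $Y^1\hookrightarrow Y$ is compact, and $\varLambda_n^{-1}$, being $B$ followed by this compact embedding, is compact on $Y$; the resolvent identity propagates compactness to $(\mu I-\varLambda_n)^{-1}$ for every $\mu\in\rho(\varLambda_n)$.

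For $ii)$ I would first compute $(\lambda I+\varLambda_n)^{-1}$ for $\lambda>0$. Solving $(\lambda I+\varLambda_n)\mathbf{w}=\mathbf{v}$, the block-bidiagonal structure yields $w_{k+1}=\lambda w_k-v_k$ for $k<n$, hence $w_k=\lambda^{k-1}w_1-\sum_{m=1}^{k-1}\lambda^{k-1-m}v_m$, and the last block row forces $(\lambda^n I+A)w_1=\sum_{k=1}^{n}\lambda^{n-k}v_k$. Thus everything is controlled by $(\lambda^n I+A)^{-1}$, which exists for $\lambda>0$ because $-\lambda^n<0<\lambda_0$; the same computation shows $\sigma(\varLambda_n)=\{\mu:(-1)^{n-1}\mu^n\in\sigma(A)\}$, so in particular $\mu<0$ forces $(-1)^{n-1}\mu^n<0\notin\sigma(A)$ and therefore $(-\infty,0)\subset\rho(\varLambda_n)$. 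The substantive step is the uniform bound $\sup_{\lambda>0}\|\lambda(\lambda I+\varLambda_n)^{-1}\|_{\mathcal{L}(Y)}<\infty$: the $(i,j)$ block of $\lambda(\lambda I+\varLambda_n)^{-1}$ equals $\lambda^{\,n+i-j}(\lambda^n I+A)^{-1}$ when $j\geqslant i$ and $\lambda^{\,n+i-j}(\lambda^n I+A)^{-1}-\lambda^{\,i-j}I=-\lambda^{\,i-j}A(\lambda^n I+A)^{-1}$ when $j<i$; viewed as an operator from the $j$-th factor $X^{\frac{n-j}{n}}$ of $Y$ into the $i$-th factor $X^{\frac{n-i}{n}}$, the spectral theorem for $A$ reduces its norm to $\sup_{s>\lambda_0}\lambda^{\,n+i-j}s^{\frac{j-i}{n}}(\lambda^n+s)^{-1}$, respectively $\sup_{s>\lambda_0}\lambda^{\,i-j}s^{1+\frac{j-i}{n}}(\lambda^n+s)^{-1}$, and since the exponents of $A$ that occur lie between $0$ and $1$, an elementary maximisation in $s$ makes each of these bounded uniformly in $\lambda>0$. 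Hence $\varLambda_n$ is a non-negative operator, the integral in \eqref{eq:to-compute-fractional-powers} converges in the Bochner sense on the core $D(\varLambda_n)\subset D(\varLambda_n^\alpha)$ (using $\|[I-\lambda(\lambda I+\varLambda_n)^{-1}]\mathbf{x}\|\leqslant\min\{C\|\mathbf{x}\|,\,C\lambda^{-1}\|\varLambda_n\mathbf{x}\|\}$), and $\varLambda_n^\alpha$ is its closure; this is $ii)$.

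For $iii)$ I would evaluate \eqref{eq:to-compute-fractional-powers} block by block. Writing $\varLambda_n(\lambda I+\varLambda_n)^{-1}=I-\lambda(\lambda I+\varLambda_n)^{-1}$, the $(i,j)$ block of the integrand is $A(\lambda^n I+A)^{-1}$ if $i=j$, $-\lambda^{\,n+i-j}(\lambda^n I+A)^{-1}$ if $j>i$, and $\lambda^{\,i-j}A(\lambda^n I+A)^{-1}$ if $j<i$. On $\mathbf{v}\in D(\varLambda_n)$ the corresponding Bochner integrals converge absolutely, so the spectral theorem for $A$ and the substitution $t=\lambda^n/s$ turn each of them into a multiple of $\int_0^\infty t^{a-1}(1+t)^{-1}\,dt=\pi/\sin(\pi a)$ with $a=\tfrac{\alpha+i-j}{n}$ (for $j\leqslant i$) or $a=\tfrac{\alpha+i-j+n}{n}$ (for $j>i$), both lying in $(0,1)$ because $0<\alpha<1$ and $|i-j|\leqslant n-1$. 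In all three cases this produces
\begin{equation*}
\big(\varLambda_n^\alpha\big)_{ij}=\frac{\sin(\alpha\pi)}{n\,\sin\!\big(\pi(\alpha+i-j)/n\big)}\,A^{\frac{\alpha+i-j}{n}},
\end{equation*}
and the identities $U_{n-1}(\cos\theta)=\sin(n\theta)/\sin\theta$ and $\sin((\alpha+i-j)\pi)=(-1)^{\,i-j}\sin(\alpha\pi)$ rewrite this exactly as \eqref{pota}. Reading off the largest power of $A$ in each column (namely $A^{\frac{\alpha+n-j}{n}}$, occurring in the last row) identifies $D(\varLambda_n^\alpha)$ with the product space displayed in \eqref{asas324}, and the endpoints $\alpha\in\{0,1\}$ follow by direct substitution, using $U_{n-1}(1)=n$, $U_{n-1}(-1)=(-1)^{n-1}n$, and the vanishing of $U_{n-1}(\cos(k\pi/n))$ for $k\in\{1,\dots,n-1\}$.

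I expect the main obstacle to be precisely the uniform resolvent bound in $ii)$. Because $\varLambda_n$ is not sectorial — indeed, by Theorem~\ref{Theorem_1a}, $-\varLambda_n$ generates no strongly continuous semigroup — the estimate $\sup_{\lambda>0}\|\lambda(\lambda I+\varLambda_n)^{-1}\|_{\mathcal{L}(Y)}<\infty$ is not automatic, and it genuinely depends on measuring the resolvent blockwise between the correct fractional-power spaces rather than on $X$ alone (on $X$ some of the off-diagonal blocks are unbounded in $\lambda$). Once that scale-aware estimate is in hand, the non-negativity of $\varLambda_n$, the evaluation of the Balakrishnan integral, and the recognition of the Chebyshev polynomials of the second kind are all essentially mechanical.
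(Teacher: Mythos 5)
Your proposal is correct and follows essentially the same route as the paper: compute $(\lambda I+\varLambda_n)^{-1}$ as an explicit block Toeplitz matrix driven by $(\lambda^n I+A)^{-1}$, insert it into the Balakrishnan integral, evaluate each block after the substitution $\mu=\lambda^n$ by the scalar formula for $A^{\frac{\alpha+i-j}{n}}$, and convert $\sin(\alpha\pi)/\sin\bigl(\pi(\alpha+i-j)/n\bigr)$ into $(-1)^{i-j}U_{n-1}\bigl(\cos(\pi(\alpha+i-j)/n)\bigr)$ via the identity $U_{n-1}(\cos\theta)\sin\theta=\sin(n\theta)$, with the endpoint cases $\alpha\in\{0,1\}$ checked by the same special values of $U_{n-1}$. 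The only divergence is in part $ii)$, where the paper cites Pazy and delegates the positivity of $\varLambda_n$ to a separate lemma proved via the moment inequality, whereas you obtain the uniform bound on $\lambda(\lambda I+\varLambda_n)^{-1}$ directly by estimating each block between the correct spaces of the fractional scale using the spectral theorem --- the same estimate reached by slightly different means.
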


From now on we will assume that $A$ has compact resolvent on $X$.

\begin{theorem}\label{0intervalowp0}
Let $\varLambda_n$ be the unbounded linear operator  defined in \eqref{LamOp1}-\eqref{LamOp2}.  For each $0<\alpha\leqslant1$ the spectrum of $-\varLambda_n^{\alpha}$ is such that the point spectrum consisting of eigenvalues  
\begin{equation}\label{eq:neigenvalues} 
\bigcup_{k=0}^{\lfloor \frac{n-1}{2}\rfloor}\left(\left\{\mu_{j}^{\frac{\alpha}{n}}e^{ i \frac{\pi (n- (n-2k-1)\alpha)}{n}}:\ j\in\mathbb{N}\right\}\cup\left\{\mu_{j}^{\frac{\alpha}{n}}e^{ i \frac{\pi (n+ (n-2k-1)\alpha)}{n}}:\ j\in\mathbb{N}\right\}\right)
\end{equation}
where $\{\mu_j\}_{j\in\mathbb{N}}$ denotes the ordered sequence of eigenvalues of $A$ including their multiplicity and $\lfloor x\rfloor := \max \{z\in \mathbb{Z}\ |\  z\leq x\}$.
\end{theorem}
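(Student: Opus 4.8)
The plan is to reduce the statement, via the spectral decomposition of $A$, to the computation of the eigenvalues of a single $n\times n$ companion matrix. Since $A$ is self‑adjoint, positive definite and (by our standing hypothesis) has compact resolvent, $X$ admits an orthonormal basis $\{\varphi_j\}_{j\in\mathbb{N}}$ of eigenvectors, $A\varphi_j=\mu_j\varphi_j$ with $0<\lambda_0<\mu_1\leqslant\mu_2\leqslant\cdots\to\infty$. Put $E_j:=\operatorname{span}\{\varphi_j\}$ and $\mathcal{E}_j:=E_j^{\,n}\subset Y$; then $Y=\overline{\bigoplus_{j}\mathcal{E}_j}$, since finite linear combinations of the $\varphi_j$ are dense in each factor $X^{\beta}$ of $Y$. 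From the explicit formula for $\varLambda_n^{-1}$ in Theorem~\ref{lem:useful:properties}(i) one sees at once that $\varLambda_n^{-1}\mathcal{E}_j\subseteq\mathcal{E}_j$; hence each $\mathcal{E}_j$ reduces $\varLambda_n$, every resolvent $(\lambda I+\varLambda_n)^{-1}$, and — through the Balakrishnan integral \eqref{eq:to-compute-fractional-powers} — also $\varLambda_n^{\alpha}$. On $\mathcal{E}_j$ the operator $\varLambda_n$ acts as the scalar $n\times n$ companion matrix $C_j$ obtained from \eqref{LamOp2} by replacing $A$ with $\mu_j$, and correspondingly $\varLambda_n^{\alpha}|_{\mathcal{E}_j}=C_j^{\alpha}$, the fractional power on the finite‑dimensional space $\mathcal{E}_j$ being simply $f(C_j)$ with $f(z)=z^{\alpha}$ in the holomorphic functional calculus.

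Next I would record that all the spectra in sight are purely discrete. By Theorem~\ref{lem:useful:properties}(i) $\varLambda_n$ has compact resolvent, and since $D(\varLambda_n^{\alpha})$ is, by Theorem~\ref{lem:useful:properties}(iii), a finite product of fractional power spaces $X^{\beta}$ with $\beta>0$, it embeds compactly into $Y$; hence $\varLambda_n^{\alpha}$ has compact resolvent for every $0<\alpha\leqslant1$ as well. Consequently $\sigma(-\varLambda_n^{\alpha})=\sigma_p(-\varLambda_n^{\alpha})$, and decomposing any eigenvector along $Y=\overline{\bigoplus_j\mathcal{E}_j}$ gives $\sigma_p(-\varLambda_n^{\alpha})=\bigcup_{j\in\mathbb{N}}\sigma_p(-C_j^{\alpha})$; this union is automatically closed, since the moduli $\mu_j^{\alpha/n}\to\infty$ force only finitely many indices $j$ to contribute eigenvalues in any bounded region.

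It remains to compute $\sigma_p(-C_j^{\alpha})$. Writing the eigenvalue equation $C_jv=\nu v$ with $v=(v_1,\dots,v_n)$, the first $n-1$ rows give $v_{i+1}=-\nu v_i$ and the last row gives $\mu_jv_1=\nu v_n$, whence $\nu^{\,n}=(-1)^{n-1}\mu_j$; thus $C_j$ has the $n$ distinct eigenvalues $\nu=\mu_j^{1/n}e^{\,i\phi}$ with $e^{\,in\phi}=(-1)^{n-1}$. By the spectral mapping theorem for the holomorphic functional calculus, $\sigma_p(C_j^{\alpha})=\{\nu^{\alpha}:\nu^{\,n}=(-1)^{n-1}\mu_j\}$, where $\nu^{\alpha}=\mu_j^{\alpha/n}e^{\,i\alpha\phi}$ with $\phi\in(-\pi,\pi)$ — a legitimate branch because $\mu_j>0$ keeps all these roots off $(-\infty,0]$. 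Choosing the representatives $\phi=\tfrac{2\pi m}{n}$ with $|m|\leqslant\tfrac{n-1}{2}$ when $n$ is odd, and $\phi=\tfrac{\pi(2m+1)}{n}$ with $2m+1\in\{\pm1,\pm3,\dots,\pm(n-1)\}$ when $n$ is even, one checks that $-\nu^{\alpha}=\mu_j^{\alpha/n}e^{\,i\pi(n\pm(n-2k-1)\alpha)/n}$ precisely when $\phi=\pm\tfrac{\pi(n-2k-1)}{n}$, and that, as $k$ runs from $0$ to $\lfloor\tfrac{n-1}{2}\rfloor$, the numbers $\pm(n-2k-1)$ run exactly once through the admissible set of values of $n\phi/\pi$ above (the two signs merging only at $n-2k-1=0$, which happens when $n$ is odd). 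Hence $\sigma_p(-C_j^{\alpha})$ equals the $k$‑indexed union appearing in \eqref{eq:neigenvalues} with $\mu_j$ in place of the running eigenvalue, and taking the union over $j\in\mathbb{N}$ finishes the proof.

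I expect the genuinely delicate point to be this last bookkeeping — verifying that the doubly‑indexed family in \eqref{eq:neigenvalues} enumerates $\bigcup_j\{-\nu^{\alpha}:\nu^{\,n}=(-1)^{n-1}\mu_j\}$ with no omissions and no spurious repetitions, uniformly in the parity of $n$ (this is exactly where the floor $\lfloor\tfrac{n-1}{2}\rfloor$ enters) — together with the routine but not entirely free verification that the spectral decomposition of $A$ really does reduce the Balakrishnan integral defining $\varLambda_n^{\alpha}$, so that $\varLambda_n^{\alpha}|_{\mathcal{E}_j}=C_j^{\alpha}$ with the branch of the fractional power kept under control.
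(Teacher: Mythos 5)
Your argument is correct, but it is not the route the paper takes. The paper's proof is essentially two lines of citation: it invokes the spectral mapping theorem of Mart\'inez and Sanz for fractional powers of non-negative operators (Lemmas \ref{LemmaMS} and \ref{corpos222}), applied to the spectrum of $\varLambda_n$ already computed in \eqref{eq:neigenvaluesLimit}, to get $\sigma(\varLambda_n^{\alpha})=\{z^{\alpha}:z\in\sigma(\varLambda_n)\}$ and then negates. You instead exploit the standing hypotheses on $A$ (self-adjoint, positive, compact resolvent) to split $Y$ into the finite-dimensional invariant blocks $\mathcal{E}_j$, on which $\varLambda_n$ is the companion matrix $C_j$, and you reduce everything to the finite-dimensional holomorphic functional calculus, where the spectral mapping statement is elementary. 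What your approach buys is self-containedness: the paper's Lemma \ref{corpos222} (that $\varLambda_n$ satisfies the hypotheses of the Mart\'inez--Sanz theorem) is stated with its proof omitted, whereas your reduction needs only the explicit resolvent formula \eqref{dasd4odd} and the Balakrishnan integral, and it simultaneously yields $\sigma=\sigma_p$ via compactness of $D(\varLambda_n^{\alpha})\hookrightarrow Y$. What it costs is generality --- it leans on the orthonormal eigenbasis of $A$, so it would not survive weakening $A$ to a non-self-adjoint sectorial operator, while the paper's abstract spectral mapping route would. Your final bookkeeping of the angles $\pm(n-2k-1)\pi/n$, including the parity discussion and the merging of the two signs at $n-2k-1=0$ for odd $n$, matches the enumeration in \eqref{eq:neigenvaluesLimit} and \eqref{eq:neigenvalues}, and your observation that no eigenvalue of $C_j$ lies on $(-\infty,0]$ correctly legitimizes the principal branch; both points are handled implicitly (indeed, silently) in the paper.
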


 In Section \ref{Fractional differential equations} entitled ‘Fractional differential equations' we study the fractional  differential equations of $n$th order in time governed by the fractional power operators $\varLambda_n^\alpha$ for $0<\alpha<1$; namely, we consider the initial value problem  in $Y$
 \begin{equation}\label{mainprobalpha}
\begin{cases}
\dfrac{d {\bf u}^\alpha}{dt}+ \varLambda_n^\alpha {\bf u}^\alpha =0,\ t>0,\\
{\bf u}^\alpha(0)={\bf u}^\alpha_0,
\end{cases}
\end{equation}
where $0<\alpha<1$ and $ \varLambda_n^\alpha$ is given by \eqref{eq:to-compute-fractional-powers}-\eqref{asas324}.

Finally, in Section \ref{Applications} entitled ‘Applications' we treated on applications from our results to partial differential equations.

\section{The fractional powers of the operators}\label{S2}

Let $\varLambda_n :D(\varLambda_n)\subset Y \to Y$ be the unbounded linear operator  defined in \eqref{LamOp1}-\eqref{LamOp2}. Before we proceed, let us recall the definition of operator of positive type $K\geqslant  1$. These are the operators that one can define the fractional power, for more details see Pazy  \cite[Section 2.2.6]{P} and Amann  \cite[Section 3.4.6]{A}.

\subsection{Spectral properties of the operator $\varLambda_n$}

In this subsection, we study spectral properties of the operator $\varLambda_n$.

\begin{definition}\label{positive type}
Let $E$ be a Banach space over a field $\mathbb{K}$ ($\mathbb{K}=\mathbb{R}$ or $\mathbb{K}=\mathbb{C}$), and let $T: D(T) \subset E \to E$ be a linear operator. We say that $T$ is of positive type $K\geqslant  1$ if it is closed, densely defined, $[0,\infty)\subset \rho(-T)$ and
\begin{equation}\label{estres}
\|(\lambda I+T)^{-1}\|_{\mathcal{L}(E)}\leqslant \frac{K}{1+\lambda}, \ \mbox{for all}\ \lambda\geqslant 0.
\end{equation}
Here, $\mathcal{L}(E)$ denotes the space of linear operators defined in $E$ into self endowed with norm
\[
\|S\|_{\mathcal{L}(E)}:=\sup_{x\in E,\ x\neq0}\dfrac{\|Sx\|_{E}}{\|x\|_{E}},\ \forall S\in \mathcal{L}(E).
\] 
\end{definition}

\begin{remark}\label{generator-positive}
Let $T:D(T)\subset E\to E$ be a linear operator  on some Banach space $E$. It is well known that, if the operator $-T$ is the generator of a strongly continuous semigroup which decays exponentially  then $T$ is of positive type, see Amann \cite[Page 156]{A}. It is not true, however, that if an  operator $T$ is of positive type then $-T$ generates a strongly continuous semigroup. It is the case of the operator in \eqref{LamOp2} as we shall see later.
\end{remark}

\begin{lemma}\label{lsetornores}[Pazy  \cite[Section 2.2.6]{P} and Amann  \cite[Section 3.4.6]{A}]
If $T: D(T) \subset E \to E$ is a positive operator of type $K\geq 1$, then 
\begin{equation}\label{setornores}
S(K):=\left\{\lambda\in\mathbb{C}:|arg\ \lambda|\leqslant \arcsin \frac{1}{2K}\right\} \cup\left\{|\lambda|\leq\frac{1}{2K}\right\}\subset\rho(-T)
\end{equation} 
and
\begin{equation}
(1+|\lambda|)\|(\lambda I+T)^{-1}\|_{\mathcal{L}(E)}\leqslant 2K+1,\ \mbox{for all}\ \lambda\in S(K).
\end{equation}
\end{lemma}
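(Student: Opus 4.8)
The plan is to establish the inclusion $S(K)\subset\rho(-T)$ and the bound $(1+|\lambda|)\|(\lambda I+T)^{-1}\|_{\mathcal{L}(E)}\leqslant 2K+1$ simultaneously, treating separately the two pieces out of which $S(K)$ is built in \eqref{setornores}: the disc $\{|\lambda|\leqslant\tfrac{1}{2K}\}$ and the sector $\{|\arg\lambda|\leqslant\arcsin\tfrac{1}{2K}\}$. In both cases the mechanism is a Neumann‑series perturbation anchored at a point of $[0,\infty)$, where the resolvent estimate \eqref{estres} is available by hypothesis.

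For the disc, I would first put $\lambda=0$ in \eqref{estres} to obtain $0\in\rho(-T)$ with $\|T^{-1}\|_{\mathcal{L}(E)}\leqslant K$. For $|\lambda|\leqslant\tfrac{1}{2K}$ write $\lambda I+T=T(I+\lambda T^{-1})$; since $|\lambda|\,\|T^{-1}\|_{\mathcal{L}(E)}\leqslant\tfrac12<1$, the factor $I+\lambda T^{-1}$ is invertible in $\mathcal{L}(E)$, whence $\lambda\in\rho(-T)$ and $\|(\lambda I+T)^{-1}\|_{\mathcal{L}(E)}\leqslant\tfrac{K}{1-|\lambda|K}$. Multiplying by $1+|\lambda|$ and observing that $t\mapsto\tfrac{K(1+t)}{1-tK}$ is increasing on $[0,\tfrac{1}{2K}]$, the maximum is attained at $t=\tfrac{1}{2K}$ and equals exactly $2K+1$.

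For the sector, let $\lambda=\rho e^{i\theta}$ with $\rho>0$ and $|\theta|\leqslant\arcsin\tfrac{1}{2K}$; then $|\sin\theta|\leqslant\tfrac{1}{2K}$ and, since $|\theta|\leqslant\tfrac{\pi}{6}<\tfrac{\pi}{3}$, one has $\cos\theta>\tfrac12$ and $\lambda_0:=\mathrm{Re}\,\lambda=\rho\cos\theta\geqslant0$. Perturbing off $\lambda_0$, write $\lambda I+T=(\lambda_0 I+T)\bigl(I+i(\mathrm{Im}\,\lambda)(\lambda_0 I+T)^{-1}\bigr)$. By \eqref{estres} at $\lambda_0$ together with $|\mathrm{Im}\,\lambda|=\rho|\sin\theta|\leqslant\tfrac{\rho}{2K}$ one gets $|\mathrm{Im}\,\lambda|\,\|(\lambda_0 I+T)^{-1}\|_{\mathcal{L}(E)}\leqslant\tfrac{\rho}{2(1+\rho\cos\theta)}<1$, so $\lambda\in\rho(-T)$ and, after simplification, $\|(\lambda I+T)^{-1}\|_{\mathcal{L}(E)}\leqslant\tfrac{2K}{2+\rho(2\cos\theta-1)}$. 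Hence $(1+|\lambda|)\|(\lambda I+T)^{-1}\|_{\mathcal{L}(E)}\leqslant\tfrac{2K(1+\rho)}{2+\rho(2\cos\theta-1)}$, and since $2\cos\theta-1\leqslant1$ this quantity is increasing in $\rho$, hence bounded by its limit $\tfrac{2K}{2\cos\theta-1}$.

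It then remains to verify $\tfrac{2K}{2\cos\theta-1}\leqslant 2K+1$, i.e. $\cos\theta\geqslant\tfrac{4K+1}{4K+2}$, which is the one genuinely computational point. From $|\sin\theta|\leqslant\tfrac{1}{2K}$ we have $\cos\theta\geqslant\sqrt{1-\tfrac{1}{4K^2}}$, so it suffices to check $\sqrt{1-\tfrac{1}{4K^2}}\geqslant\tfrac{4K+1}{4K+2}$ for $K\geqslant1$; after squaring and clearing denominators this reduces to $8K^{3}-K^{2}-4K-1\geqslant0$, which holds for all $K\geqslant1$ (it equals $2$ at $K=1$ and has positive derivative there and beyond). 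Combining the disc and the sector gives $S(K)\subset\rho(-T)$ with the stated bound. The main obstacle is purely bookkeeping of the constants so that the Neumann estimate comes out as exactly $2K+1$ rather than something slightly larger; the decisive choice is to anchor the sector perturbation at $\lambda_0=\mathrm{Re}\,\lambda$ (not at $|\lambda|$), which keeps $1+\lambda_0$ of the right size relative to $1+|\lambda|$.
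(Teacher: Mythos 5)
Your argument is correct: both the disc and the sector computations check out (in particular the final reduction to $8K^{3}-K^{2}-4K-1\geqslant 0$ for $K\geqslant 1$ is right, and the monotonicity claims for $t\mapsto \frac{K(1+t)}{1-tK}$ on $[0,\tfrac{1}{2K}]$ and $\rho\mapsto\frac{1+\rho}{2+(2\cos\theta-1)\rho}$ are easily verified). Note, however, that the paper itself gives no proof of this lemma --- it is quoted verbatim from Pazy and Amann --- so the only meaningful comparison is with the classical argument in those references. There one perturbs off an arbitrary anchor $\lambda\geqslant 0$ on the positive real axis and shows that the whole disc $\bigl\{\mu:|\mu-\lambda|\leqslant\frac{1+\lambda}{2K}\bigr\}$ lies in $\rho(-T)$ with $(1+|\mu|)\|(\mu I+T)^{-1}\|\leqslant 2K+1$, and then checks that $S(K)$ is covered by the union of these discs; your version replaces that geometric covering step by anchoring the Neumann perturbation at $\lambda_0=\mathrm{Re}\,\mu$ and paying for it with the explicit polynomial inequality at the end. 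This is a reasonable trade: the covering step in the classical proof is actually slightly delicate on the two boundary rays of the sector (the discs are only asymptotically tangent to them), whereas your anchoring handles the closed sector directly. One small point of rigor you should make explicit: the factorizations $\lambda I+T=T(I+\lambda T^{-1})$ and $\mu I+T=(\lambda_0 I+T)\bigl(I+i(\mathrm{Im}\,\mu)(\lambda_0 I+T)^{-1}\bigr)$ require observing that the bounded Neumann factor maps $D(T)$ bijectively onto $D(T)$ (which follows since $T^{-1}$, resp. $(\lambda_0 I+T)^{-1}$, maps $E$ into $D(T)$), so that the product really inverts the unbounded operator on its domain.
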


\begin{corollary}\label{setorgener}Pazy  \cite[Section 2.2.6]{P} and Amann  \cite[Section 3.4.6]{A}
Suppose that $T:D(T)\subset E\to E$ is a linear operator of positive type and there exists $\theta\in (0,\pi)$ such that \eqref{estres} is satisfied for $\lambda \in \mathbb{C}$ with $|arg\ \lambda|\leq \theta$. If $\alpha\in(0,1)$ satisfies $\alpha<\pi/2(\pi-\theta)$ then $-T^{\alpha}$ generates a strongly continuous analytic semigroup on $E$.
\end{corollary}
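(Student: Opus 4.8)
The plan is to show that the fractional power $T^{\alpha}$ is itself sectorial, with its spectral sector shrunk by the factor $\alpha$, and then to appeal to the classical characterisation of generators of analytic semigroups. Spelling out the hypotheses: $T$ is closed, densely defined, $0\in\rho(T)$, and the estimate \eqref{estres} on the closed sector $\{|\arg\lambda|\leqslant\theta\}$ says, after the substitution $\mu=-\lambda$, that $\rho(T)$ contains $\{\mu\in\mathbb{C}\setminus\{0\}:|\arg\mu|\geqslant\pi-\theta\}$ with $\|(\mu I-T)^{-1}\|_{\mathcal L(E)}\leqslant C(1+|\mu|)^{-1}$ there; equivalently $\sigma(T)\subset\overline{\Sigma_{\pi-\theta}}$, where $\Sigma_{\varphi}:=\{z\in\mathbb{C}\setminus\{0\}:|\arg z|<\varphi\}$. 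Observe that the restriction $\alpha<\pi/2(\pi-\theta)$ is precisely $\alpha(\pi-\theta)<\pi/2$. Hence it suffices to produce $\delta>0$ and $M\geqslant1$ with $\{z\in\mathbb{C}\setminus\{0\}:|\arg z|\leqslant\tfrac{\pi}{2}+\delta\}\subset\rho(-T^{\alpha})$ and $\|(zI+T^{\alpha})^{-1}\|_{\mathcal L(E)}\leqslant M|z|^{-1}$ there; then $T^{\alpha}$ is sectorial in the sense of Henry and $-T^{\alpha}$ generates a strongly continuous analytic semigroup on $E$ (Henry \cite[Theorem 1.3.4]{H}, Pazy \cite{P}), the strong continuity on all of $E$ following from density of $D(T^{\alpha})$.

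For the resolvent of $T^{\alpha}$ I would use the Dunford--Riesz functional calculus for the sectorial operator $T$. Fix $\psi$ with $\pi-\theta<\psi<\pi$ and $\alpha\psi<\pi/2$, which is possible exactly because $\alpha(\pi-\theta)<\pi/2$, and let $\Gamma=\partial\Sigma_{\psi}$ be the positively oriented boundary of $\Sigma_{\psi}$ (indented near $0$ if one prefers, where it stays in $\rho(T)$ since $0\in\rho(T)$). Since $\mu\mapsto\mu^{\alpha}$ carries $\overline{\Sigma_{\psi}}$ into $\overline{\Sigma_{\alpha\psi}}$, for every $z$ with $|\arg z|<\pi-\alpha\psi$ the function $\mu\mapsto(z+\mu^{\alpha})^{-1}$ is holomorphic and bounded on a neighbourhood of $\overline{\Sigma_{\psi}}$ and is $O(|\mu|^{-\alpha})$ at infinity; therefore
\[
(zI+T^{\alpha})^{-1}=\frac{1}{2\pi i}\int_{\Gamma}\frac{1}{z+\mu^{\alpha}}\,(\mu I-T)^{-1}\,d\mu ,
\]
the integral converging absolutely because the integrand is $O(|\mu|^{-1-\alpha})$ along $\Gamma$. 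Note the sector $|\arg z|<\pi-\alpha\psi$ has half-angle exceeding $\pi/2$. (Equivalently one may run the argument directly from \eqref{estres} via the Balakrishnan-type representation of $(zI+T^{\alpha})^{-1}$ as an integral of $(sI+T)^{-1}$ against an explicit scalar kernel.)

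The estimate is then a routine contour computation. Restrict $z$ to the slightly smaller sector $|\arg z|\leqslant\pi-\alpha\psi-\varepsilon$, where $0<\varepsilon<\pi/2-\alpha\psi$; for such $z$ and for $\mu\in\Gamma$ one has $\arg\mu^{\alpha}\in[-\alpha\psi,\alpha\psi]$ while $|\arg(-z)|\geqslant\alpha\psi+\varepsilon$, so $z$ and $\mu^{\alpha}$ are never antipodal and in fact $|z+\mu^{\alpha}|\geqslant c_{\varepsilon}\,(|z|+|\mu|^{\alpha})$ with $c_{\varepsilon}>0$ independent of $z$ and $\mu$. Feeding this together with $\|(\mu I-T)^{-1}\|\leqslant C|\mu|^{-1}$ into the contour integral and parametrising $\Gamma$ by $|\mu|=r$ gives
\[
\|(zI+T^{\alpha})^{-1}\|_{\mathcal L(E)}\leqslant C'\int_{0}^{\infty}\frac{dr}{r\,(|z|+r^{\alpha})}=\frac{C''}{|z|},
\]
the last step by the substitution $r=|z|^{1/\alpha}t$. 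Since $\varepsilon<\pi/2-\alpha\psi$, the sector $|\arg z|\leqslant\pi-\alpha\psi-\varepsilon$ has half-angle $\tfrac{\pi}{2}+\delta$ with $\delta:=\tfrac{\pi}{2}-\alpha\psi-\varepsilon>0$, which is exactly the conclusion sought in the first paragraph; taking $z>0$ real one also reads off $[0,\infty)\subset\rho(-T^{\alpha})$ with the required decay, so $T^{\alpha}$ is in particular of positive type.

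I expect the only real obstacle to be the uniform lower bound $|z+\mu^{\alpha}|\geqslant c_{\varepsilon}(|z|+|\mu|^{\alpha})$, i.e.\ keeping the angle between $\mu^{\alpha}$ and $-z$ bounded below uniformly over the contour and over the target sector for $z$; this is the single place where the hypothesis $\alpha<\pi/2(\pi-\theta)$ is genuinely used, since it is exactly what lets the image sector $\Sigma_{\alpha\psi}$ be narrow enough to leave room for $z$ to sweep out a sector strictly wider than a half-plane while staying clear of $-\Sigma_{\alpha\psi}$. The remaining ingredients — convergence of the Dunford integral, the scaling identity for the final one-dimensional integral, and the passage from the sectorial resolvent bound to the generation statement — are standard.
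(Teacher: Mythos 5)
The paper offers no proof of this corollary at all --- it is quoted from Pazy and Amann --- so your argument is a reconstruction of a literature result rather than a parallel to anything in the text. Your overall strategy is the standard one and most of it is sound: translate the hypothesis into $\sigma(T)\subset\overline{\Sigma_{\pi-\theta}}$ with resolvent decay, represent $(zI+T^{\alpha})^{-1}$ by a Dunford--Riesz integral over $\partial\Sigma_{\psi}$ with $\pi-\theta<\psi$ and $\alpha\psi<\pi/2$, and use the transversality bound $|z+\mu^{\alpha}|\geqslant c_{\varepsilon}(|z|+|\mu|^{\alpha})$; that bound is correct and is indeed where $\alpha<\pi/2(\pi-\theta)$ enters.

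The gap is in the step you call routine. First, the scalar integral you display, $\int_{0}^{\infty}r^{-1}(|z|+r^{\alpha})^{-1}dr$, diverges at $r=0$, so the asserted evaluation ``$=C''/|z|$ by scaling'' is false as written; the bound $\|(\mu I-T)^{-1}\|\leqslant C|\mu|^{-1}$ is not what positive type gives near $\mu=0$, where the resolvent is merely bounded (recall $0\in\rho(T)$). Second, even after substituting the correct bound $C(1+|\mu|)^{-1}$ and indenting the contour at the origin, the triangle inequality only yields
\[
\int_{0}^{\infty}\frac{dr}{(1+r)(|z|+r^{\alpha})}\ \asymp\ \frac{\log(2+|z|)}{|z|}\qquad\text{as }|z|\to\infty,
\]
a genuine logarithmic loss over the bound $M/|z|$ that the generation theorem requires; no adjustment of constants repairs this, because the loss comes from putting absolute values inside the integral and discarding cancellation. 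The standard way to close the argument is to replace the naive Dunford estimate by the Balakrishnan--Kato representation
\[
(\lambda I+T^{\alpha})^{-1}=\frac{\sin(\alpha\pi)}{\pi}\int_{0}^{\infty}\frac{s^{\alpha}\,(sI+T)^{-1}}{\lambda^{2}+2\lambda s^{\alpha}\cos(\alpha\pi)+s^{2\alpha}}\,ds
\]
and its rotated analogues along the rays $\arg s=\pm\psi$: the extra factor $s^{\alpha}$ in the numerator and the quadratic denominator, bounded below by $c_{\alpha}\bigl(|\lambda|+s^{\alpha}\bigr)^{2}$ away from the critical rays, make the integral absolutely convergent and scale exactly like $1/(1+|\lambda|)$ under $s\mapsto|\lambda|^{1/\alpha}t$. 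Alternatively one can keep your contour but split it at $|\mu|=|z|^{1/\alpha}$ and subtract the leading term of $(z+\mu^{\alpha})^{-1}$ on the inner piece to recover the cancellation by hand. Until one of these is carried out, the resolvent estimate $M/|z|$ on a sector of half-angle exceeding $\pi/2$, and hence the generation of the analytic semigroup, is not established.
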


\begin{lemma}\label{alfaanalit}
Assume that $T:D(T)\subset E\to E$ is a linear operator of positive type $K\geqslant 1$ and $-T$ generates a $C_0$-semigroup of contractions on $E$. Then $-T^{\alpha}$ generates a strongly continuous analytic semigroup on $E$ for $0<\alpha<1$.
\end{lemma}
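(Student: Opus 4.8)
The plan is to reduce this to Corollary \ref{setorgener} by checking that the contraction-semigroup hypothesis forces the resolvent estimate \eqref{estres} to hold on a half-plane, i.e. on a sector of half-angle arbitrarily close to $\pi/2$, and then to verify that the admissibility condition $\alpha < \pi/(2(\pi-\theta))$ of that corollary is satisfied for every $\alpha \in (0,1)$ once $\theta$ is close enough to $\pi/2$. Indeed, if $-T$ generates a $C_0$-semigroup of contractions, then by the Hille--Yosida theorem the open right half-plane lies in $\rho(-T)$ and $\|(\lambda I + T)^{-1}\|_{\mathcal{L}(E)} \leqslant 1/\operatorname{Re}\lambda$ for all $\lambda$ with $\operatorname{Re}\lambda > 0$. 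First I would record this, together with the positive-type bound \eqref{estres} on $[0,\infty)$, and then interpolate the two: for $\lambda$ in a sector $|\arg\lambda| \leqslant \theta$ with $\theta < \pi/2$ one has $\operatorname{Re}\lambda \geqslant |\lambda|\cos\theta$, so $\|(\lambda I+T)^{-1}\|_{\mathcal{L}(E)} \leqslant 1/(|\lambda|\cos\theta)$, and combining with the $K/(1+\lambda)$-type control near the origin yields a bound of the form $\|(\lambda I+T)^{-1}\|_{\mathcal{L}(E)} \leqslant K_\theta/(1+|\lambda|)$ valid on the whole sector $\{|\arg\lambda|\leqslant\theta\}$, where $K_\theta$ depends only on $K$ and $\theta$.

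Next, given $\alpha \in (0,1)$, I would choose $\theta = \theta(\alpha) \in (0,\pi/2)$ close enough to $\pi/2$ that $\alpha < \pi/(2(\pi - \theta))$; this is possible because $\pi/(2(\pi-\theta)) \to 1$ as $\theta \to \pi/2^-$, and $\alpha < 1$ strictly. For that choice of $\theta$, the operator $T$ satisfies the hypotheses of Corollary \ref{setorgener} (positive type, plus \eqref{estres} on $|\arg\lambda|\leqslant\theta$), and the conclusion of that corollary gives that $-T^\alpha$ generates a strongly continuous analytic semigroup on $E$. Since $\alpha \in (0,1)$ was arbitrary, this proves the lemma.

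The one point requiring a little care — and the place I expect the only genuine friction — is the passage from the half-plane resolvent bound $1/\operatorname{Re}\lambda$ to a bound of the exact shape $K_\theta/(1+|\lambda|)$ demanded by Definition \ref{positive type}/Corollary \ref{setorgener} on the \emph{closed} sector including the origin: near $\lambda = 0$ the estimate $1/(|\lambda|\cos\theta)$ blows up, so there one must instead use that $0 \in \rho(-T)$ with $\|(\cdot I + T)^{-1}\|$ bounded on a neighbourhood of $0$ (a consequence of \eqref{estres} and the analyticity of the resolvent), and then patch the two regimes together by splitting the sector into $\{|\lambda| \leqslant 1\}$ and $\{|\lambda| \geqslant 1\}$ and taking the larger constant. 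This is routine but should be written out explicitly so that the constant $K_\theta$ is seen to be finite. Everything else is a direct invocation of the Hille--Yosida theorem and of Corollary \ref{setorgener}.
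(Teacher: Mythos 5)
Your proposal is correct and follows essentially the same route as the paper: both use the Hille--Yosida contraction estimate $\|(\lambda I+T)^{-1}\|_{\mathcal{L}(E)}\leqslant 1/\operatorname{Re}\lambda$ together with $\operatorname{Re}\lambda\geqslant|\lambda|\cos\theta$ to extend \eqref{estres} to a sector of half-angle $\theta$ arbitrarily close to $\pi/2$, then invoke Corollary \ref{setorgener} after choosing $\theta$ so that $\alpha<\pi/(2(\pi-\theta))$. The only cosmetic difference is in the near-origin regime: where you patch by hand using boundedness of the resolvent near $0$, the paper disposes of that region at once by citing Lemma \ref{lsetornores}, which already gives the uniform bound $(1+|\lambda|)\|(\lambda I+T)^{-1}\|_{\mathcal{L}(E)}\leqslant 2K+1$ on the set $S(K)$ containing a neighbourhood of the origin in the sector.
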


\begin{proof}
Fix $\alpha\in\mathbb{R}$ with $0<\alpha<1$ and choose $\theta\in(0,\pi)$ such that 
\[
\pi-\frac{\pi}{2\alpha}<\theta<\frac{\pi}{2}.
\]
For such $\theta$ we have
\[
\alpha<\frac{\pi}{2(\pi-\theta)}<1.
\]

Thus, from Corollary \ref{setorgener}, it is sufficient to show that \eqref{estres} is satisfied for any $\lambda \in S_{\theta}:=\{\lambda\in\mathbb{C}:|arg\ \lambda|\leq \theta\}$. By Lemma \ref{lsetornores}, we only need to consider the case $\lambda\in S_{\theta}\setminus S(K)$. It follows from Hille-Yosida Theorem that
\[
\begin{split}
(1+|\lambda|)\|(\lambda I+T)^{-1}\|_{\mathcal{L}(E)}&\leqslant \frac{1+|\lambda|}{Re\ \lambda}\\
&\leqslant\frac{1}{Re\ \lambda}+\frac{1}{\cos\theta}\\
&\leqslant\frac{\cos^2\theta+2K}{2K\cos\theta}
\end{split}
\]
for any $\lambda\in S_{\theta}\setminus S(K)$.\qed
\end{proof}

\medskip

From this, we will show  that it is possible to calculate explicitly the fractional power $\varLambda_n^\alpha$ of the operator $\varLambda_n$ for  $0<\alpha<1$, and with this,  we will  consider the fractional approximations of \eqref{mainprob} given by
\begin{equation}\label{fracmainprob}
\begin{cases}
\dfrac{d {\bf u}^\alpha}{dt}+ \varLambda_n^\alpha {\bf u}^\alpha = 0,\ t>0,\ 0<\alpha<1,\\
{\bf u}^\alpha(0)={\bf u}_0^\alpha.
\end{cases}
\end{equation} 
Here, $\varLambda_n^\alpha:D(\varLambda_n^\alpha)\subset Y\to Y$ denotes the fractional power operator of $\varLambda_n$ to be defined by $\varLambda_n^\alpha=(\varLambda_n^{-\alpha})^{-1}$, where $\varLambda_n^{-\alpha}$ is given by the formula in \eqref{FracPower} with domain $D(\varLambda_n^\alpha)$ characterized by complex interpolation methods, see e.g. Amann \cite{A} and Cholewa and D\l otko \cite{ChD}.

\begin{lemma}\label{resoln} 
The resolvent set of $-\varLambda_n$ is given by
\begin{equation}\label{resolvlambda} 
\rho(-\varLambda_n)=\{\lambda \in \mathbb{C}: \lambda^n \in \rho(-A)\}.
\end{equation}
\end{lemma}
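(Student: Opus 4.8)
The plan is to solve the resolvent equation for $-\varLambda_n$ by forward substitution, collapsing it into a single equation for the first coordinate that is governed by $\lambda^n I+A$, and then to read off \eqref{resolvlambda}. Since $\varLambda_n$ is closed (indeed $0\in\rho(\varLambda_n)$ by Theorem \ref{lem:useful:properties}) and $A$ is closed, membership of $\lambda$ in $\rho(-\varLambda_n)$ (resp.\ of $\lambda^n$ in $\rho(-A)$) is equivalent to bijectivity of $\lambda I+\varLambda_n\colon D(\varLambda_n)\to Y$ (resp.\ of $\lambda^n I+A\colon D(A)\to X$), the inverse being automatically bounded by the closed graph theorem; so it is enough to match these two bijectivity statements.

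Fix $\lambda\in\mathbb C$ and $\mathbf f=(f_1,\dots,f_n)\in Y$, and look for $\mathbf u=(v_1,\dots,v_n)\in D(\varLambda_n)$ with $(\lambda I+\varLambda_n)\mathbf u=\mathbf f$. By \eqref{LamOp2} this reads $\lambda v_k-v_{k+1}=f_k$ for $k=1,\dots,n-1$ together with $A v_1+\lambda v_n=f_n$. Forward substitution in the first $n-1$ relations yields $v_k=\lambda^{k-1}v_1-\sum_{j=1}^{k-1}\lambda^{\,k-1-j}f_j$ for $k\geqslant2$, and substituting $v_n$ into the last relation collapses the whole system to the single equation
\[
(\lambda^n I+A)\,v_1 \;=\; f_n+\sum_{j=1}^{n-1}\lambda^{\,n-j}f_j \;\in X.
\]
This identity is purely algebraic and holds for every $\lambda$; it puts solutions $\mathbf u\in D(\varLambda_n)$ of the resolvent equation into bijective correspondence with solutions $v_1\in D(A)$ of the displayed equation.

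If $\lambda^n\in\rho(-A)$, I recover $v_1$ uniquely from the displayed equation, then $v_2,\dots,v_n$ from the substitution formula, and a direct check shows the resulting $\mathbf u$ solves $(\lambda I+\varLambda_n)\mathbf u=\mathbf f$. The one point requiring care is that $\mathbf u\in D(\varLambda_n)=X^1\times X^{\frac{n-1}{n}}\times\cdots\times X^{\frac1n}$: since $v_1\in X^1$ one has $\lambda^{k-1}v_1\in X^{\frac{n-k+1}{n}}$, while each $f_j$ occurring in the $k$-th coordinate has $j\leqslant k-1$, hence lies in the factor $X^{\frac{n-j}{n}}$ of $Y$, which embeds into $X^{\frac{n-k+1}{n}}$; so $v_k$ lands in the correct space. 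Thus $\lambda I+\varLambda_n$ is bijective onto $Y$, i.e.\ $\lambda\in\rho(-\varLambda_n)$.

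Conversely, assume $\lambda\in\rho(-\varLambda_n)$. Applying $(\lambda I+\varLambda_n)^{-1}$ to data of the form $\mathbf f=(0,\dots,0,g)$, $g\in X$ arbitrary, gives through the above reduction (whose right-hand side is now just $g$) an element $v_1\in D(A)$ with $(\lambda^n I+A)v_1=g$, so $\lambda^n I+A$ is surjective; and if $w\in D(A)$ satisfies $(\lambda^n I+A)w=0$ then $\mathbf w=(w,\lambda w,\dots,\lambda^{n-1}w)$ lies in $D(\varLambda_n)$ (each entry being in $X^1$) and satisfies $(\lambda I+\varLambda_n)\mathbf w=0$, forcing $\mathbf w=0$ and hence $w=0$, so $\lambda^n I+A$ is injective. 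Therefore $\lambda^n\in\rho(-A)$, which establishes \eqref{resolvlambda}. I expect the only mildly delicate point to be the fractional-scale bookkeeping in the forward implication (checking each coordinate sits in $X^{\frac{n-k+1}{n}}$); the triangular elimination itself is routine.
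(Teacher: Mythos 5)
Your proposal is correct and follows essentially the same route as the paper: both directions reduce the resolvent equation to $(\lambda^n I+A)v_1=\sum_{k=1}^{n}\lambda^{n-k}f_k$, with the converse handled via the test vectors $(w,\lambda w,\dots,\lambda^{n-1}w)$ and $(0,\dots,0,g)$ exactly as in the paper. The only cosmetic difference is that you perform the triangular elimination explicitly (and carefully verify the fractional-scale membership $v_k\in X^{\frac{n-k+1}{n}}$), whereas the paper displays the full Toeplitz inverse matrix \eqref{dasd4odd}.
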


\proof
Suppose that $\lambda\in\mathbb{C}$ is such that $\lambda^n\in\rho(-A)$. We claim that $\lambda\in\rho(-\varLambda_n)$. Indeed, since $-\varLambda_n$ is a closed operator, we only need to show that 
$$\lambda I+\varLambda_n:D(\varLambda_n)\subset Y\to Y$$ 
is bijective; namely,
\begin{equation}\label{dasd4}
\lambda I+\varLambda_n= \left[\begin{smallmatrix}
\lambda I & -I & 0 & \cdots & 0 & 0\\0 & \lambda I & -I & \cdots & 0  & 0\\ 0 & 0 & \lambda I & \cdots & 0 & 0\\ \vdots & \vdots & \vdots & \ddots & \vdots & \vdots\\
0 & 0  & 0 & \cdots & \lambda I & -I\\
 A & 0 & 0 & \cdots & 0 & \lambda I
\end{smallmatrix}\right],\ \mbox{for all}\ \lambda\in\rho(-\varLambda_n),
\end{equation}

For injectivity consider ${\bf u}=\left[\begin{smallmatrix}
u_1\\
u_2\\
\vdots\\
u_n
\end{smallmatrix}\right] \in D(\varLambda_n)$ and $(\lambda I+\varLambda_n){\bf u}=0$, then

\begin{equation}\label{sistinj}
\begin{cases}
\lambda u_i-u_{i+1}= 0,\ \text{\ for\  }1\leq i\leq n-1\\
Au_1 + \lambda u_n = 0.
\end{cases}
\end{equation}
From \eqref{sistinj} we have
\begin{equation}
(\lambda^n I+A)u_1=0.
\end{equation}  
Since $\lambda^n\in \rho(-A)$, we conclude that $u_1=0$ and consequently $\textbf{u}=0$. 

For surjectivity given    
$\varphi=\left[\begin{smallmatrix}
\varphi_1\\
\varphi_2\\
\vdots\\
\varphi_n
\end{smallmatrix}\right]\in Y$  we take ${\bf u}=\left[\begin{smallmatrix}
u_1\\
u_2\\
\vdots\\
u_n
\end{smallmatrix}\right]$ with ${\bf u}=(\lambda I+\varLambda_n)^{-1}\varphi$; namely, with an abuse of notation in the matrix representation of the bounded linear operator\linebreak $(\lambda I+\varLambda_n)^{-1}$, we can write
\begin{equation}\label{dasd4odd}
(\lambda I+\varLambda_n)^{-1}=(\lambda^{n}I+A))^{-1} \cdot
 \left[\begin{smallmatrix}
\lambda^{n-1} I & \lambda^{n-2} I   & \lambda^{n-3} I & \cdots & \lambda I   & I \\ \\ 
-A  &  \lambda^{n-1} I  & \lambda^{n-2}  I & \cdots & \lambda^2 I & \lambda I \\ \\ 
-\lambda A & -A  & \lambda^{n-1} I & \cdots & \lambda^3 I & \lambda^2 I \\ \\ 
\vdots & \vdots & \vdots & \ddots & \vdots & \vdots\\ \\ 
-\lambda^{n-3}A  & -\lambda^{n-4}A   & -\lambda^{n-5}A  & \cdots & \lambda^{n-1}  I& \lambda^{n-2} I  \\ \\ 
-\lambda^{n-2}A & -\lambda^{n-3}A   &  -\lambda^{n-4}A  & \cdots & -A & \lambda^{n-1} I \\ \\ 
\end{smallmatrix}\right]
\end{equation}
for all $\lambda\in\rho(-\varLambda_n)$. 

Observe that the matrix representation of the bounded linear operator $(\lambda I+\varLambda_n)^{-1}$ is a Toeplitz matrix; namely 
\[
(\lambda I+\varLambda_n)^{-1}=[A_{ij}],
\]
and if the $i,j$ element of $(\lambda I+\varLambda_n)^{-1}$ is denoted $A_{ij}$, then we have
\[
A_{ij}=A_{i+1,j+1}=a_{i-j}.
\]

In other words,
\begin{equation}\label{asad45tg}
u_1=(\lambda^{n}I+A))^{-1}  \left(\sum_{k=1}^n\lambda^{n-k}\varphi_k\right) 
\end{equation}
and
\begin{equation}\label{asad45tg2}
u_i=(\lambda^{n}I+A))^{-1}\left[ A\left(\sum_{k=1}^{i-2}\lambda^k\varphi_k\right)   +A\varphi_{i-1}+ \sum_{k=i}^n\lambda^{k-1}\varphi_k\right]
\end{equation}
for $2\leqslant i\leqslant n$.

Note that, for $1\leq i\leq n$, $u_i$ is well defined since $\lambda^n\in\rho(-A)$. Moreover, $u_i\in D(A^{\frac{n-i+1}{n}})$ for $1\leqslant i\leqslant n$ . Then we have ${\bf u}\in D(\varLambda_n)$ and 
\[
(\lambda I+\varLambda_n){\bf u}=\varphi. 
\]
Now suppose that $\lambda\in \rho(-\varLambda_n)$. If $u_1\in D(A)$ is such that $(\lambda^n I+A)u_1=0$, taking ${\bf u}=\left[\begin{smallmatrix}
u\\
\lambda u_1\\
\vdots\\
\lambda^{n-1}u_1
\end{smallmatrix}\right]\in D(\varLambda_n)$ we have
\begin{equation}
(\lambda I+\varLambda_n){\bf u} = 0.
\end{equation}
Since $\lambda\in \rho(-\varLambda_n)$, it follows that ${\bf u}=0$ and consequently $u_1=0$, which proves the injectivity of $\lambda^n I+A$. Given $f\in X$, consider $\varphi=\left[\begin{smallmatrix}
0\\
0\\
f
\end{smallmatrix}\right] \in Y$. By the surjectivity of $\lambda I + \varLambda_n$ there exists ${\bf u}=\left[\begin{smallmatrix}
u_1\\
u_2\\
\vdots\\
u_n
\end{smallmatrix}\right] \in D(\varLambda_n)$ such that
\begin{equation}
(\lambda I + \varLambda_n){\bf u}=\varphi
\end{equation}
which gives 
\[
(\lambda^n I+A)u_1=f
\]
and the proof is complete. 
\qed

\begin{remark}
The inversion of a Toeplitz matrix is usually not a Toeplitz matrix. But, in our case, we have $(\lambda I+\varLambda_n)$ and $(\lambda I+\varLambda_n)^{-1}$ Toeplitz matrices for any $\lambda\in\rho(-\varLambda_n)$.
\end{remark}

\subsection{Proof of Theorem \ref{Theorem_1a}: Ill-posed problems}

If $-\varLambda_n$ generates a strongly continuous semigroup $\{e^{-\varLambda_n t}:t\geqslant0\}$ on $Y$, it follows from  Pazy \cite[Theorem 1.2.2]{P} that there exist  constants $\omega\geq 0$ and $M\geq 1$ such that

\begin{equation}
\|e^{-\varLambda_n t}\|_{\mathcal{L}(Y)}\leq Me^{\omega t}\ \ \text{\ for \ } 0\leq t< \infty.
\end{equation}
Moreover, from Pazy \cite[Remark 1.5.4]{P} we have

\begin{equation}\label{semiplan}
\{\lambda \in \mathbb{C}: Re\lambda > \omega\} \subset \rho(-\varLambda_n) 
\end{equation}
where $\rho(-\varLambda_n)$ denotes the resolvent set of the operator $-\varLambda_n$.

Let ${\bf u}={\left[\begin{smallmatrix} v_1\\ v_2 \\ v_3\\ \vdots \\ v_n \end{smallmatrix}\right]}$ be a nontrivial element of $D(\varLambda_n)$. We shall consider the eigenvalue problem for the operator  $-\varLambda_n$ 
\[
-\varLambda_n{\bf u} = \lambda{\bf u}.
\]
A straightforward calculation implies 
\[
\sigma_p(-\varLambda_n)=\{\lambda \in \mathbb{C}: \lambda^n \in \sigma_p(-A)\},
\]
where $\sigma_p(-\varLambda_n)$ and $\sigma_p(-A)$ denotes the point spectrum set of $-\varLambda_n$ and $-A$, respectively.
Since $\sigma_p(-A)=\{-\mu_j: j\in \mathbb{N}\}$ with $\mu_j\in \sigma_p(A)$ for each $j\in \mathbb{N}$ and $\mu_j\to \infty$ as $j\to \infty$, we conclude that
\[
\sigma_p(-\varLambda_n)\cap \{\lambda \in \mathbb{C}:Re \lambda>\omega\}\neq \emptyset
\]
This contradicts the equation \eqref{semiplan} and therefore $-\varLambda_n$ can not be the infinitesimal generator of a strongly continuous semigroup on $Y$.
\qed

\begin{remark}
Thanks to Proof of  Theorem \ref{Theorem_1a} we can see that the location of the eigenvalues of $\varLambda_n$ is a reunion of the $n$ semi-lines $\{r^{\frac{1}{n}}e^{\frac{i\pi\varrho}{n}};\ r>0\ \mbox{and}\ \varrho\in\{1,3,5,\ldots, 2n-1\}\}$; namely,  for each  the spectrum of $\varLambda_n$ is such that the point spectrum consisting of eigenvalues  
\begin{equation}\label{eq:neigenvaluesLimit} 
\bigcup_{k=0}^{\lfloor \frac{n-1}{2}\rfloor}\left(\left\{\mu_{j}^{\frac{1}{n}}e^{ \frac{i\pi (-n+2k+1)}{n}}:\ j\in\mathbb{N}\right\}\cup\left\{\mu_{j}^{\frac{1}{n}}e^{ \frac{i\pi (n-2k-1)}{n}}:\ j\in\mathbb{N}\right\}\right)
\end{equation}
where $\{\mu_j\}_{j\in\mathbb{N}}$ denotes the ordered sequence of eigenvalues of $A$ including their multiplicity and $\lfloor x\rfloor := \max \{z\in \mathbb{Z}\ |\  z\leq x\}$. 

Note that if $n$ is odd, then $\frac{n-1}{2}$ is a positive integer number $(\lfloor \frac{n-1}{2}\rfloor=\frac{n-1}{2})$, and if $k_0=\frac{n-1}{2}$ in \eqref{eq:neigenvaluesLimit}, then $e^{ \frac{i\pi (-n+2k_0+1)}{n}}=e^{ \frac{i\pi (n-2k_0-1)}{n}}$.

Note that if $n$ is even, then $\frac{n-1}{2}$ is not an integer number and $\lfloor \frac{n-1}{2}\rfloor=\frac{n-2}{2}$. In this case $e^{ \frac{i\pi (-n+2k+1)}{n}}\neq e^{ \frac{i\pi (n-2k-1)}{n}}$ for any $k\in\{0,1,2,\ldots,\frac{n-2}{2}\}$.

For example, see the figures below for $n=2,3,4$ and $5$.
\begin{figure}[H] 
\begin{center}
\begin{tikzpicture}
\draw[-stealth', densely dotted] (-3.0,0) -- (3.15,0) node[below] {\ \ \ \ \ \ \ $\scriptstyle {\rm Re} (\lambda)$};
\draw[-stealth', densely dotted ] (0,-3.15) -- (0,3.15) node[left] {\color{black}$\scriptstyle{\rm Im} (\lambda)$};
\draw[densely dotted ] (0.7,-3.15) -- (0.7,3.15);
\fill[gray!10!] (0.7,-3.15) rectangle (3.15,-0.02);
\fill[gray!10!] (0.7,0.02) rectangle (3.15,3.15);
\node at (5.5,2) {{\tiny\color{blue} Semi-line containing the eigenvalues of $\varLambda_2$}};
\draw[color=blue,->, densely dotted] (3,1.8) -- (0.3,1.6);
\draw[color=blue,->, densely dotted] (3,1.8) -- (0.3,-1.3);
\draw[color=blue, line width=1pt] (0,2.8) -- (0,0);
\draw[color=blue, line width=1pt] (0,-2.8) -- (0,0);
\node at (0.5,-0.2) {\color{blue}{\tiny $\omega$}};
\node at (0.8,2.8) {\color{blue}{\tiny $\mu_n^{\frac{1}{2}}e^{i\frac{\pi}{2}}$}};
\node at (0.8,-2.4) {\color{blue}{\tiny $\mu_n^{\frac{1}{2}}e^{-i\frac{\pi}{2}}$}};
\end{tikzpicture}
\end{center}
\caption{Location of the eigenvalues of $\varLambda_2$ with $n=2$}\label{fig0001}
\end{figure}
\begin{figure}[H]
\begin{center}
\begin{tikzpicture}
\draw[-stealth', densely dotted] (0,0) -- (3.15,0) node[below] {\ \ \ \ \ \ \ $\scriptstyle {\rm Re} (\lambda)$};
\draw[-stealth', densely dotted ] (0,-3.15) -- (0,3.15) node[left] {\color{black}$\scriptstyle{\rm Im} (\lambda)$};
\draw[densely dotted ] (0.7,-3.15) -- (0.7,3.15);
\fill[gray!10!] (0.7,-3.15) rectangle (3.15,-0.02);
\fill[gray!10!] (0.7,0.02) rectangle (3.15,3.15);
\node at (5.5,2) {{\tiny\color{blue} Semi-line containing the eigenvalues of $\varLambda_3$}};
\draw[color=blue,->, densely dotted] (3,1.8) -- (1.3,1.6);
\draw[color=blue,->, densely dotted] (3,1.8) -- (1.3,-1.3);
\draw[color=blue,->, densely dotted] (3,1.8) -- (-1,0.1);
\draw[color=blue, line width=1pt] (-3.15,0) -- (0,0);
\draw[color=blue, line width=1pt] (2,2.8) -- (0,0);
\draw[color=blue, line width=1pt] (2,-2.8) -- (0,0);
\node at (-2.0,-0.3) {\color{blue}{\tiny $\mu_n^{\frac{1}{3}}e^{i\pi}$}};
\node at (0.5,-0.2) {\color{blue}{\tiny $\omega$}};
\node at (2.5,2.8) {\color{blue}{\tiny $\mu_n^{\frac{1}{3}}e^{i\frac{\pi}{3}}$}};
\node at (2.6,-2.4) {\color{blue}{\tiny $\mu_n^{\frac{1}{3}}e^{i\frac{5\pi}{3}}$}};
\end{tikzpicture}
\end{center}
\caption{Location of the eigenvalues of $\varLambda_3$ with $n=3$}\label{fig01}
\end{figure}
\begin{figure}[H]
\begin{center}
\begin{tikzpicture}
\draw[-stealth', densely dotted] (-3.15,0) -- (3.15,0) node[below] {\ \ \ \ \ \ \ $\scriptstyle {\rm Re} (\lambda)$};
\draw[-stealth', densely dotted ] (0,-3.15) -- (0,3.15) node[left] {\color{black}$\scriptstyle{\rm Im} (\lambda)$};
\draw[densely dotted ] (0.7,-3.15) -- (0.7,3.15);
\fill[gray!10!] (0.7,-3.15) rectangle (3.15,-0.02);
\fill[gray!10!] (0.7,0.02) rectangle (3.15,3.15);
\node at (-5.2,-2.1) {{\tiny\color{blue} Semi-line containing the eigenvalues of $\varLambda_4$}};
\draw[color=blue,->, densely dotted] (-4,-1.8) -- (-1.4,-1.3);
\draw[color=blue,->, densely dotted] (-4,-1.8) -- (1.2,-1.3);
\draw[color=blue,->, densely dotted] (-4,-1.8) -- (-1.4,1.3);
\draw[color=blue,->, densely dotted] (-4,-1.8) -- (1,1.3);
\draw[color=blue, line width=1pt] (-2.5,2.5) -- (0,0);
\draw[color=blue, line width=1pt] (2.5,2.5) -- (0,0);
\draw[color=blue, line width=1pt] (-2.5,-2.5) -- (0,0);
\draw[color=blue, line width=1pt] (2.5,-2.5) -- (0,0);
\node at (0.5,-0.2) {\color{blue}{\tiny $\omega$}};
\node at (-1.6,2.6) {\color{blue}{\tiny $\mu_j^{\frac14}e^{i\frac{3\pi}{4}}$}};
\node at (1.8,2.6) {\color{blue}{\tiny $\mu_j^{\frac14}e^{i\frac{\pi}{4}}$}};
\node at (-1.6,-2.4) {\color{blue}{\tiny $\mu_j^{\frac14}e^{i\frac{5\pi}{4}}$}};
\node at (1.6,-2.4) {\color{blue}{\tiny $\mu_j^{\frac14}e^{i\frac{7\pi}{4}}$}};
\end{tikzpicture}
\end{center}
\caption{Location of the eigenvalues of $\varLambda_4$ with $n=4$}\label{fig04}
\end{figure}
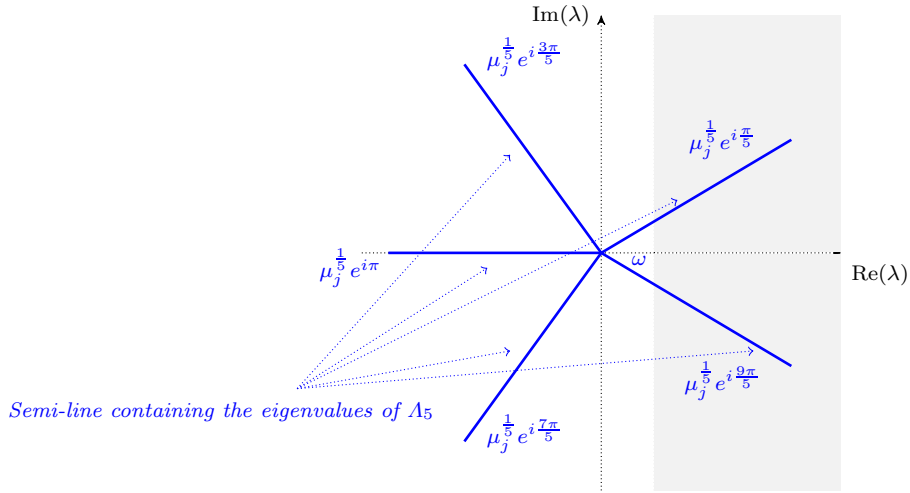
\begin{figure}[H]
\begin{center}
\begin{tikzpicture}
\draw[-stealth', densely dotted] (-3.15,0) -- (3.15,0) node[below] {\ \ \ \ \ \ \ $\scriptstyle {\rm Re} (\lambda)$};
\draw[-stealth', densely dotted ] (0,-3.15) -- (0,3.15) node[left] {\color{black}$\scriptstyle{\rm Im} (\lambda)$};
\draw[densely dotted ] (0.7,-3.15) -- (0.7,3.15);
\fill[gray!10!] (0.7,-3.15) rectangle (3.15,-0.02);
\fill[gray!10!] (0.7,0.02) rectangle (3.15,3.15);
\node at (-5,-2.1) {{\tiny\color{blue} Semi-line containing  the eigenvalues of $\varLambda_5$}};
\draw[color=blue,->, densely dotted] (-4,-1.8) -- (-1.2,-1.3);
\draw[color=blue,->, densely dotted] (-4,-1.8) -- (2,-1.3);
\draw[color=blue,->, densely dotted] (-4,-1.8) -- (-1.2,1.3);
\draw[color=blue,->, densely dotted] (-4,-1.8) -- (-1.5,-0.2);
\draw[color=blue,->, densely dotted] (-4,-1.8) -- (1,0.7);
\draw[color=blue, line width=1pt] (-1.8,2.5) -- (0,0);
\draw[color=blue, line width=1pt] (2.5,1.5) -- (0,0);
\draw[color=blue, line width=1pt] (-2.8,0) -- (0,0);
\draw[color=blue, line width=1pt] (-1.8,-2.5) -- (0,0);
\draw[color=blue, line width=1pt] (2.5,-1.5) -- (0,0);
\node at (0.5,-0.1) {\color{blue}{\tiny $\omega$}};
\node at (-1,2.6) {\color{blue}{\tiny $\mu_j^{\frac15}e^{i\frac{3\pi}{5}}$}};
\node at (1.6,1.5) {\color{blue}{\tiny $\mu_j^{\frac15}e^{i\frac{\pi}{5}}$}};
\node at (-3.3,-0.2) {\color{blue}{\tiny $\mu_j^{\frac15}e^{i\pi}$}};
\node at (-1,-2.4) {\color{blue}{\tiny $\mu_j^{\frac15}e^{i\frac{7\pi}{5}}$}};
\node at (1.6,-1.7) {\color{blue}{\tiny $\mu_j^{\frac15}e^{i\frac{9\pi}{5}}$}};
\end{tikzpicture}
\end{center}
\caption{Location of the eigenvalues of $\varLambda_5$ with $n=5$}\label{fig06}
\end{figure}
\end{remark}

\begin{lemma}\label{lemma1}
If $n$ is even, then the unbounded linear operator $-\varLambda_n$ with $\varLambda_n:D(\varLambda_n)\subset Y\to Y$ defined in \eqref{LamOp1}-\eqref{LamOp2} is  the infinitesimal generator of a strongly continuous group on $Y$.
\end{lemma}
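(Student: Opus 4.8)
The plan is to characterise group generators through the two-sided Hille--Yosida theorem: a closed, densely defined operator $B$ on a Banach space is the infinitesimal generator of a strongly continuous group if and only if both $B$ and $-B$ generate strongly continuous semigroups, equivalently there are constants $M\geqslant1$ and $\omega\geqslant0$ with $\{\lambda\in\mathbb{C}:|\mathrm{Re}\,\lambda|>\omega\}\subset\rho(B)$ and $\|(\lambda I-B)^{-m}\|_{\mathcal{L}(Y)}\leqslant M(|\mathrm{Re}\,\lambda|-\omega)^{-m}$ for all such $\lambda$ and all $m\geqslant1$, see Pazy \cite{P}. I would apply this with $B=-\varLambda_n$, so that the whole problem reduces to producing uniform resolvent estimates for $-\varLambda_n$ on the two half-planes $\{\mathrm{Re}\,\lambda>\omega\}$ and $\{\mathrm{Re}\,\lambda<-\omega\}$.

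Two structural facts already in hand would drive the argument. First, Lemma \ref{resoln} identifies $\rho(-\varLambda_n)=\{\lambda:\lambda^n\in\rho(-A)\}$ and \eqref{dasd4odd} displays $(\lambda I+\varLambda_n)^{-1}$ explicitly as a Toeplitz block matrix each of whose entries is a scalar power $\lambda^{p}$ times either $(\lambda^{n}I+A)^{-1}$ or $A(\lambda^{n}I+A)^{-1}$. Second, and this is where the parity of $n$ enters, for even $n$ one has $(-\lambda)^n=\lambda^n$, so $\rho(-\varLambda_n)$ is invariant under $\lambda\mapsto-\lambda$ and the resolvent of $+\varLambda_n$ at $\lambda$ coincides, up to sign, with the block expression \eqref{dasd4odd} evaluated at $-\lambda$. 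Consequently any bound established for $\|(\lambda I+\varLambda_n)^{-1}\|_{\mathcal{L}(Y)}$ on $\{\mathrm{Re}\,\lambda>\omega\}$ transfers verbatim to the reflected half-plane, so it suffices to treat one half-plane and then invoke the symmetry for the other. This reduction is the reason one expects the construction to go through for even $n$ while it is unavailable for odd $n$, where the reflection is absent.

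For the single-half-plane estimate I would diagonalise against $A$. Since $A$ is self-adjoint and positive definite with compact resolvent, let $\{\mu_j,\varphi_j\}_{j\in\mathbb{N}}$ be its spectral decomposition; expanding every component of $Y$ in the basis $\{\varphi_j\}$ turns each block entry of \eqref{dasd4odd} into the scalar Fourier multiplier $\lambda^{p}\mu_j^{q}(\lambda^{n}+\mu_j)^{-1}$ acting on the $j$th mode. Matching the exponents $p,q$ to the fractional orders that define the norms of $Y$ and of $Y^1=D(\varLambda_n)$, I would bound these multipliers using $|\lambda^{n}+\mu_j|\geqslant\mathrm{dist}(\lambda^{n},(-\infty,0])$ and collect the finitely many blocks into a single estimate of the form $\|(\lambda I+\varLambda_n)^{-1}\|_{\mathcal{L}(Y)}\leqslant M(|\mathrm{Re}\,\lambda|-\omega)^{-1}$, the higher powers $m\geqslant2$ then following from the resolvent identity. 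Feeding this into the group generation theorem would complete the proof.

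The step I expect to be the main obstacle is precisely this uniform multiplier bound, which is equivalent to confining $\sigma(-\varLambda_n)$ to a vertical strip $\{|\mathrm{Re}\,\lambda|\leqslant\omega\}$ and controlling the resolvent just outside it. The difficulty is geometric: everything hinges on the location, relative to the imaginary axis, of the $n$ roots of $\lambda^{n}=-\mu_j$ and on showing that $\lambda^{p}(\lambda^{n}+\mu_j)^{-1}$ decays correctly and uniformly in $j$ on \emph{both} half-planes simultaneously. For $n=2$ these roots are purely imaginary and the estimate collapses to the classical skew-adjoint computation underlying Stone's theorem; for general even $n$ the same mechanism must be carried through the explicit multipliers above, and establishing this two-sided uniform control is the delicate point that I would expect to occupy the bulk of the argument.
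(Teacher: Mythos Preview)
Your proposed strategy cannot succeed, and in fact the paper's own proof of this lemma establishes the \emph{opposite} of the stated conclusion.

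The step you correctly flag as the main obstacle---confining $\sigma(-\varLambda_n)$ to a vertical strip $\{|\mathrm{Re}\,\lambda|\leqslant\omega\}$---is not merely delicate; it is impossible for every even $n\geqslant4$. The point spectrum of $-\varLambda_n$ consists of the $n$th roots of $-\mu_j$, namely $\mu_j^{1/n}e^{i\pi(2k+1)/n}$ for $k=0,\ldots,n-1$. When $n=2$ these lie on the imaginary axis, but for $n\geqslant4$ the root with $k=0$ has real part $\mu_j^{1/n}\cos(\pi/n)>0$, which tends to $+\infty$ as $j\to\infty$. Hence the spectrum is unbounded in the right half-plane (and, by the reflection symmetry you noted, in the left half-plane as well), so no vertical strip contains it and the Hille--Yosida hypothesis fails outright. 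Your multiplier bounds $|\lambda^{p}(\lambda^{n}+\mu_j)^{-1}|$ will blow up exactly along these eigenvalue rays.

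This is precisely the argument the paper gives under Lemma~\ref{lemma1}: it reproduces verbatim the proof of Theorem~\ref{Theorem_1a}, assuming $-\varLambda_n$ generates a $C_0$-semigroup, observing that $\sigma_p(-\varLambda_n)\cap\{\mathrm{Re}\,\lambda>\omega\}\neq\emptyset$, and reaching a contradiction. In other words, the text proves that $-\varLambda_n$ is \emph{not} a semigroup generator, which is incompatible with the lemma as stated. The lemma statement appears to be erroneous (it also contradicts Theorem~\ref{Theorem_1a}, which covers all $n\geqslant3$); you should treat it as asserting non-generation, in which case the paper's argument is the one to follow and your plan should be abandoned.
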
 

\proof
If $-\varLambda_n$ generates a strongly continuous semigroup $\{e^{-\varLambda_n t}:t\geqslant0\}$ on $Y$, it follows from  Pazy \cite[Theorem 1.2.2]{P} that there exist  constants $\omega\geq 0$ and $M\geq 1$ such that

\begin{equation}
\|e^{-\varLambda_n t}\|_{\mathcal{L}(Y)}\leq Me^{\omega t}\ \ \text{\ for \ } 0\leq t< \infty.
\end{equation}
Moreover, from Pazy \cite[Remark 1.5.4]{P} we have

\begin{equation}\label{semiplan}
\{\lambda \in \mathbb{C}: Re\lambda > \omega\} \subset \rho(-\varLambda_n) 
\end{equation}
where $\rho(-\varLambda_n)$ denotes the resolvent set of the operator $-\varLambda_n$.

Let 
\[
{\bf u}={\left[\begin{smallmatrix} v_1\\ v_2 \\ v_3\\ \vdots \\ v_n \end{smallmatrix}\right]}
\] 
be a nontrivial element of $D(\varLambda_n)$. We shall consider the eigenvalue problem for the operator  $-\varLambda_n$ 
\[
-\varLambda_n{\bf u} = \lambda{\bf u}.
\]
A straightforward calculation implies 
\[
\sigma_p(-\varLambda_n)=\{\lambda \in \mathbb{C}: \lambda^n \in \sigma_p(-A)\}.
\]
Where $\sigma_p(-\varLambda_n)$ and $\sigma_p(-A)$ denote the point spectrum set of $-\varLambda_n$ and $-A$, respectively.
Since $\sigma_p(-A)=\{-\mu_j: j\in \mathbb{N}\}$ with $\mu_j\in \sigma_p(A)$ for each $j\in \mathbb{N}$ and $\mu_j\to \infty$ as $j\to \infty$, we conclude that
\[
\sigma_p(-\varLambda_n)\cap \{\lambda \in \mathbb{C}:Re \lambda>\omega\}\neq \emptyset
\]
This contradicts the equation \eqref{semiplan} and therefore $-\varLambda_n$ can not be the infinitesimal generator of a strongly continuous semigroup on $Y$.
\qed

\begin{remark}
We note that if $n\geqslant3$, then $-\varLambda_n$ is not a dissipative operator on $Y$, according to Pazy \cite[Definition 4.1, Chapter 1]{P}. Indeed, if $u$ be a non-trivial element in $X^{1}$ and 
\[
\mathbf{u}=\left[\begin{smallmatrix}
u\\
0\\
0\\
\vdots\\
0\\
-u 
\end{smallmatrix}\right]
\] 
then 
\[
\left<-\varLambda_n \mathbf{u},\mathbf{u}\right>_Y=\left<\left[\begin{smallmatrix} 0\\
0\\
0\\
\vdots\\
-u\\
-Au 
\end{smallmatrix}\right],\left[\begin{smallmatrix}
u\\
0\\
0\\
\vdots\\
0\\
-u 
\end{smallmatrix}\right]\right>_Y=\langle Au,u\rangle_X=\|u\|^2_{X^{\frac12}}>0.
\]
Explicitly, this means that $-\varLambda_n$ is not an infinitesimal generator of a strongly continuous semigroup of contractions on $Y$. Nevertheless, the statement in Lemma \ref{lemma1} is more precise because it says that $-\varLambda_n$ cannot be the infinitesimal generator of a strongly continuous semigroup of any type on $Y$.
\end{remark}

\begin{theorem}(Moment Inequality)
Let $T:D(T)\subset E\to E$ be a linear operator of positive type. If $\alpha\in[0,\infty)$, then there exists a constant $K>0$ such that 
\begin{equation}
\|x\|_{E^{\alpha}}\leq \frac{K}{2}(\rho^{\alpha}\|x\|_E+\rho^{\alpha-1}\|x\|_{E^1}),
\end{equation}
\begin{equation}\label{moment}
\|x\|_{E^{\alpha}}\leq K\|x\|_{E}^{1-\alpha}\|x\|_{E^1}^{\alpha}
\end{equation}
\end{theorem}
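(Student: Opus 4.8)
The plan is to derive the two--parameter estimate directly from the Balakrishnan integral representation of $T^{\alpha}$ and then to get the interpolation inequality \eqref{moment} by optimizing the free parameter $\rho>0$. First I would dispose of the trivial cases: for $\alpha=0$ and $\alpha=1$ both inequalities hold as soon as $K\geqslant 2$, so I may assume $0<\alpha<1$; likewise it suffices to argue for $x\in E^{1}=D(T)$ (for $x\notin D(T)$ the quantity $\|x\|_{E^{1}}$ is infinite and the inequalities are vacuous), and since $T$ is of positive type it is injective, so $\|x\|_{E}=0$ or $\|x\|_{E^{1}}=0$ both force $x=0$; hence I may also assume $\|x\|_{E}\neq 0$.

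For the heart of the argument I would recall that, for $x\in D(T)$ and $0<\alpha<1$,
\[
T^{\alpha}x=\frac{\sin(\alpha\pi)}{\pi}\int_{0}^{\infty}\lambda^{\alpha-1}\,T(\lambda I+T)^{-1}x\,d\lambda,
\]
the integral converging absolutely because the positive--type bound \eqref{estres} gives the two estimates $\|T(\lambda I+T)^{-1}x\|_{E}\leqslant (1+K)\|x\|_{E}$ (writing $T(\lambda I+T)^{-1}=I-\lambda(\lambda I+T)^{-1}$ and using $\lambda\|(\lambda I+T)^{-1}\|_{\mathcal L(E)}\leqslant \lambda K/(1+\lambda)\leqslant K$) and $\|T(\lambda I+T)^{-1}x\|_{E}\leqslant K\lambda^{-1}\|x\|_{E^{1}}$ (writing $T(\lambda I+T)^{-1}x=(\lambda I+T)^{-1}(Tx)$ and using $\|(\lambda I+T)^{-1}\|_{\mathcal L(E)}\leqslant K/\lambda$). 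Fixing $\rho>0$ I would split $\int_{0}^{\infty}=\int_{0}^{\rho}+\int_{\rho}^{\infty}$, apply the first estimate on $(0,\rho)$ (so that $\int_{0}^{\rho}\lambda^{\alpha-1}\,d\lambda=\rho^{\alpha}/\alpha$) and the second on $(\rho,\infty)$ (so that $\int_{\rho}^{\infty}\lambda^{\alpha-2}\,d\lambda=\rho^{\alpha-1}/(1-\alpha)$, finite precisely because $\alpha<1$), and collect terms to obtain
\[
\|x\|_{E^{\alpha}}=\|T^{\alpha}x\|_{E}\leqslant \frac{\sin(\alpha\pi)}{\pi}\Bigl(\tfrac{1+K}{\alpha}\,\rho^{\alpha}\|x\|_{E}+\tfrac{K}{1-\alpha}\,\rho^{\alpha-1}\|x\|_{E^{1}}\Bigr),
\]
which is the first asserted inequality once the $\alpha$--dependent prefactor is renamed.

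To pass to \eqref{moment} I would minimize $\rho\mapsto \rho^{\alpha}\|x\|_{E}+\rho^{\alpha-1}\|x\|_{E^{1}}$ over $\rho>0$; differentiating, the minimizer is $\rho_{\ast}=\dfrac{(1-\alpha)\|x\|_{E^{1}}}{\alpha\|x\|_{E}}$, which is legitimate since $\|x\|_{E}\neq 0$, and substituting $\rho_{\ast}$ one checks that both $\rho_{\ast}^{\alpha}\|x\|_{E}$ and $\rho_{\ast}^{\alpha-1}\|x\|_{E^{1}}$ equal an explicit ($\alpha$--dependent) constant times $\|x\|_{E}^{1-\alpha}\|x\|_{E^{1}}^{\alpha}$. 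This yields \eqref{moment} with a larger, still only $\alpha$--dependent, constant $K$.

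The estimates above are essentially routine; the only point that needs care is the bookkeeping of which resolvent bound is used on which half--line. The $\|x\|_{E}$--estimate must be used on $(0,\rho)$ and the $\|x\|_{E^{1}}$--estimate on $(\rho,\infty)$, since otherwise the relevant integral of $\lambda^{\alpha-1}$ (resp. $\lambda^{\alpha-2}$) diverges; this is exactly why the admissible range is $0<\alpha<1$. The companion point to verify is that the Balakrishnan representation is valid on $D(T)$ with an absolutely convergent integral, which is precisely what the positive--type hypothesis provides. Note that the constants degenerate as $\alpha\to 0^{+}$ or $\alpha\to 1^{-}$, but this is harmless because those endpoints were handled separately at the start.
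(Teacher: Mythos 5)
Your argument is correct and is the standard Balakrishnan-splitting proof of the moment inequality; the paper itself states this theorem without proof (it is quoted as a classical fact, cf.\ Henry, Theorem 1.4.4, or Pazy, Theorem 2.6.10), so there is no in-paper argument to diverge from, and yours is exactly the textbook one: the two resolvent bounds, the split of the integral at $\rho$, and the optimization in $\rho$ are all handled correctly. The only caveat is that the theorem as printed asserts $\alpha\in[0,\infty)$ while your proof (like the classical one) covers only $\alpha\in[0,1]$; this is a defect of the statement rather than of your proof, since for $\alpha>1$ the right-hand side of \eqref{moment} is not even the natural interpolation quantity, and the paper only ever invokes the inequality for $\alpha\in[0,1]$ (in Lemma \ref{estpotb}).
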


\begin{lemma}\label{estpotb}
Let $T:D(T)\subset E\to E$ be a linear operator of positive type. If $\alpha\in[0,1]$ and $\lambda\geq 0$, then $T^{\alpha}(\lambda I+T)^{-1}\in \mathcal{L}(E)$ and 
\begin{equation}
\|T^{\alpha}(\lambda I+T)^{-1}\|_{\mathcal{L}(E)}\leq \frac{K}{(1+\lambda)^{1-\alpha}}
\end{equation}
for some $K\geq 1$.
\end{lemma}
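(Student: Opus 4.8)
The plan is to reduce everything to the moment inequality just stated, treating the two endpoints first and then interpolating. Since $T$ is of positive type we have $[0,\infty)\subset\rho(-T)$, so for every $\lambda\geqslant 0$ the bounded operator $(\lambda I+T)^{-1}$ exists and maps $E$ onto $D(T)=E^1$; as $0\leqslant\alpha\leqslant1$ gives $E^1\hookrightarrow E^{\alpha}=D(T^{\alpha})\hookrightarrow E$, the composition $T^{\alpha}(\lambda I+T)^{-1}$ is at least everywhere defined on $E$, and the whole point is to estimate it. For $\alpha=0$ the asserted bound is literally \eqref{estres} from Definition \ref{positive type}. For $\alpha=1$ I would write $T(\lambda I+T)^{-1}=I-\lambda(\lambda I+T)^{-1}$ and use \eqref{estres} to get $\|T(\lambda I+T)^{-1}\|_{\mathcal L(E)}\leqslant 1+\lambda\cdot\frac{K}{1+\lambda}\leqslant 1+K$, which is exactly the claim with exponent $(1+\lambda)^{0}$.

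For $0<\alpha<1$ I would carry along the two auxiliary bounds now in hand, $\|(\lambda I+T)^{-1}\|_{\mathcal L(E)}\leqslant\frac{K}{1+\lambda}$ and $\|T(\lambda I+T)^{-1}\|_{\mathcal L(E)}\leqslant 1+K$, and apply the multiplicative moment inequality \eqref{moment} to the element $x:=(\lambda I+T)^{-1}y$ for an arbitrary $y\in E$:
\[
\|T^{\alpha}(\lambda I+T)^{-1}y\|_E=\|x\|_{E^{\alpha}}\leqslant K\|x\|_E^{1-\alpha}\|x\|_{E^1}^{\alpha}\leqslant K\Big(\tfrac{K}{1+\lambda}\|y\|_E\Big)^{1-\alpha}\big((1+K)\|y\|_E\big)^{\alpha}.
\]
This yields $\|T^{\alpha}(\lambda I+T)^{-1}\|_{\mathcal L(E)}\leqslant K^{2-\alpha}(1+K)^{\alpha}(1+\lambda)^{\alpha-1}\leqslant K(1+K)(1+\lambda)^{-(1-\alpha)}$ since $K\geqslant1$, and renaming $K(1+K)$ as $K$ finishes the proof. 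The key feature is that the constant produced is uniform in $\alpha\in[0,1]$, consistent with the statement asking only for \emph{some} $K\geqslant1$.

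If one prefers to avoid the multiplicative form, the additive form $\|x\|_{E^{\alpha}}\leqslant\frac{K}{2}(\rho^{\alpha}\|x\|_E+\rho^{\alpha-1}\|x\|_{E^1})$ works equally well: insert the same $x$, bound the two norms as above, and minimize $\rho^{\alpha}\frac{K}{1+\lambda}+\rho^{\alpha-1}(1+K)$ over $\rho>0$; the minimizer is proportional to $1+\lambda$, and substituting it back reproduces the factor $(1+\lambda)^{\alpha-1}$.

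I do not expect a genuine obstacle here. The only two points needing any care are (a) verifying that $T^{\alpha}(\lambda I+T)^{-1}$ is everywhere defined and bounded, which follows from the nesting of the fractional power scale, and (b) pinning down the $\lambda$-dependence exactly, which is handled automatically by the homogeneity of the moment inequality in the interpolation parameter $\rho$ (equivalently, by the exponents $1-\alpha$ and $\alpha$ in \eqref{moment}). The small preliminary estimate $\|T(\lambda I+T)^{-1}\|_{\mathcal L(E)}\leqslant 1+K$ is the one ingredient that is not completely immediate from the definition, but it drops out of the resolvent identity in a single line.
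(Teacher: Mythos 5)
Your proof is correct and follows essentially the same route as the paper: bound $\|T(\lambda I+T)^{-1}\|_{\mathcal L(E)}$ via the resolvent identity $T(\lambda I+T)^{-1}=I-\lambda(\lambda I+T)^{-1}$ together with the positive-type estimate, then apply the multiplicative moment inequality \eqref{moment} to $(\lambda I+T)^{-1}x$. The only differences are cosmetic (you handle the endpoints $\alpha=0,1$ separately and track the constant explicitly, whereas the paper absorbs everything into a generic $K$).
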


\proof
Here, $K$ will denote a positive constant, not necessarily the same one.
We first observe that
\begin{equation}\nonumber
T(\lambda I + T)^{-1} = I-\lambda(\lambda I + T)^{-1} .
\end{equation}
This and the fact that $T$ is of positive type give
\begin{equation}\label{opresbound}
\begin{split}
\|T(\lambda I + T)^{-1}\|_{\mathcal{L}(E)}&\leqslant 1 + \lambda\|(\lambda I + T)^{-1}\|_{\mathcal{L}(E)}\\
&\leqslant 1+ K.
\end{split}
\end{equation}
Now, for $x\in E$, from the inequality \eqref{moment} we have 
\[
\begin{split}
\|T^{\alpha}(\lambda I+T)^{-1}x\|_E &\leqslant K \|(\lambda I+T)^{-1}x\|_E^{1-\alpha}\|T(\lambda I+T)^{-1}x\|_E^{\alpha}\\
&\leqslant \frac{K}{(1+\lambda)^{1-\alpha}}
\end{split}
\]
for some $K\geq 1$. In the last inequality we use the fact that $T$ is of positive type and that $\|T(\lambda I+T)^{-1}x\|_E^{\alpha}$ is bounded by \eqref{opresbound}.
\qed

\begin{lemma}\label{corpos}
The unbounded linear operator $\varLambda_n$ defined in \eqref{LamOp1}-\eqref{LamOp2} is of positive type $K\geqslant 1$.
\end{lemma}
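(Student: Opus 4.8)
The plan is to verify the three defining properties in Definition \ref{positive type} for the operator $T=\varLambda_n$: that it is closed and densely defined, that $[0,\infty)\subset\rho(-\varLambda_n)$, and that its resolvent obeys the decay estimate \eqref{estres} with a suitable constant $K\geqslant1$.

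The first two are quick. Dense domain: $D(\varLambda_n)=X^1\times X^{\frac{n-1}{n}}\times\cdots\times X^{\frac1n}$ is dense in $Y$ since each $X^\alpha$ is dense in every $X^\beta$ with $\beta<\alpha$. Closedness: by part $i)$ of Theorem \ref{lem:useful:properties}, $0\in\rho(\varLambda_n)$, so $\varLambda_n$ has a bounded inverse and is therefore closed. For the inclusion $[0,\infty)\subset\rho(-\varLambda_n)$, recall from Lemma \ref{resoln} that $\rho(-\varLambda_n)=\{\lambda\in\mathbb{C}:\lambda^n\in\rho(-A)\}$; since $A$ is self-adjoint and positive definite we have $\sigma(A)\subset(\lambda_0,\infty)$ with $\lambda_0>0$, so for every $\lambda\geqslant0$ the number $-\lambda^n\leqslant0$ lies in $\rho(A)$, i.e. $\lambda^n\in\rho(-A)$, hence $\lambda\in\rho(-\varLambda_n)$.

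The heart of the matter is the resolvent bound, for which I would use the explicit Toeplitz representation \eqref{dasd4odd} of $(\lambda I+\varLambda_n)^{-1}$ and estimate each of its $n^2$ blocks separately, keeping careful track of which fractional‑power space each block maps between (the $i$-th component of $Y$ is $X^{\frac{n-i}{n}}$). For a block on or above the diagonal, write $m=j-i\in\{0,\dots,n-1\}$; then the $(i,j)$ block is $\lambda^{n-1-m}(\lambda^nI+A)^{-1}$, and since $A$ commutes with its resolvent, its norm as an operator $X^{\frac{n-j}{n}}\to X^{\frac{n-i}{n}}$ equals, by the spectral theorem, $\lambda^{n-1-m}\sup_{\mu\in\sigma(A)}\frac{\mu^{m/n}}{\lambda^n+\mu}$. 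The weighted arithmetic–geometric mean inequality gives $\lambda^n+\mu\geqslant\lambda^{n-m}\mu^{m/n}$, whence this norm is at most $\lambda^{n-1-m}\cdot\lambda^{-(n-m)}=\lambda^{-1}$; it is also bounded by $\lambda^{n-1-m}(\inf\sigma(A))^{m/n-1}$, which is uniformly bounded for $0\leqslant\lambda\leqslant1$ because $n-1-m\geqslant0$. Combining the two bounds yields an $O\!\big(\tfrac{1}{1+\lambda}\big)$ estimate. For a block strictly below the diagonal, write $m=i-j\in\{1,\dots,n-1\}$; the $(i,j)$ block is $-\lambda^{m-1}A(\lambda^nI+A)^{-1}$, whose norm $X^{\frac{n-j}{n}}\to X^{\frac{n-i}{n}}$ equals $\lambda^{m-1}\sup_\mu\frac{\mu^{1-m/n}}{\lambda^n+\mu}$, and the companion inequality $\lambda^n+\mu\geqslant\lambda^{m}\mu^{1-m/n}$ together with the trivial bound $\sup_\mu\frac{\mu^{1-m/n}}{\lambda^n+\mu}\leqslant(\inf\sigma(A))^{-m/n}$ for small $\lambda$ again gives $O\!\big(\tfrac{1}{1+\lambda}\big)$.

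Finally, since the $Y$-norm is equivalent (with a constant depending only on $n$) to the maximum of the component norms, summing the block estimates produces a constant with $\|(\lambda I+\varLambda_n)^{-1}\|_{\mathcal{L}(Y)}\leqslant\frac{K}{1+\lambda}$ for all $\lambda\geqslant0$, and we replace $K$ by $\max\{K,1\}$ if necessary so that $K\geqslant1$. The only real obstacle is the bookkeeping: one must correctly identify the exponent of $A$ ($m/n$ above the diagonal, $1-m/n$ below) that appears in the spectral supremum, and then observe that the elementary inequality $\lambda^n+\mu\geqslant\lambda^{n-m}\mu^{m/n}$ is exactly what makes the powers of $\lambda$ cancel to leave decay of order $\lambda^{-1}$; no individual estimate is difficult, but the diagonal/superdiagonal and strictly subdiagonal cases require slightly different versions of the same argument.
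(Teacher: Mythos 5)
Your proposal is correct and follows the same overall structure as the paper's proof: verify density of the domain, closedness, the inclusion $[0,\infty)\subset\rho(-\varLambda_n)$ via Lemma \ref{resoln}, and then the decay estimate \eqref{estres} by bounding the blocks of the explicit resolvent \eqref{dasd4odd} componentwise. The one genuine difference is in how the blocks are estimated. The paper invokes Lemma \ref{estpotb}, i.e.\ the bound $\|A^{\beta}(\lambda^n I+A)^{-1}\|_{\mathcal{L}(X)}\leqslant K(1+\lambda^n)^{\beta-1}$ coming from the moment inequality, which works for any operator of positive type; you instead use the spectral theorem for the self-adjoint, positive definite $A$ together with the weighted arithmetic--geometric mean inequality $\lambda^n+\mu\geqslant\lambda^{n-m}\mu^{m/n}$. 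Your computations of the exponents ($A^{m/n}$ for the blocks on or above the diagonal, $A^{1-m/n}$ below it, after accounting for the weights $X^{\frac{n-i}{n}}$ of the components of $Y$) are correct, and the resulting cancellation to $O(\lambda^{-1})$ for large $\lambda$ plus the uniform bound for $\lambda\leqslant1$ does give $\frac{K}{1+\lambda}$. What the paper's route buys is independence from self-adjointness (consistent with its stated ambition to treat non-self-adjoint non-negative operators); what your route buys is a completely elementary, self-contained estimate with explicit constants. Your closedness argument (bounded everywhere-defined inverse implies closed) is also a legitimate shortcut compared with the paper's direct sequential argument, provided you verify the inverse formula independently of closedness, which the explicit matrix in Theorem \ref{lem:useful:properties}$\,i)$ allows.
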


\proof
Firstly, we show that the operator $\varLambda_n$ is closed. Indeed, if ${\bf u}_j={\left[\begin{smallmatrix} u_{1,j}\\ u_{2,j} \\ u_{3,j}\\ \vdots \\ u_{n,j} \end{smallmatrix}\right]}\in D(\varLambda_n)$ with ${\bf u}_j\to {\bf u}={\left[\begin{smallmatrix} u_1\\ u_2 \\ u_3\\ \vdots \\ u_n \end{smallmatrix}\right]}$ in $Y$ as $j\to\infty$, and $\varLambda_n {\bf u}_j\to \varphi$ in $Y$ as $j\to\infty$, where $\varphi={\left[\begin{smallmatrix} \varphi_1\\ \varphi_2 \\ \varphi_3\\ \vdots \\ \varphi_n \end{smallmatrix}\right]}$, then 
$$
\begin{cases}
u_{i,j}\to -\varphi_{i-1}\ \mbox{in}\ X^{\frac{n-i+1}{n}}\hookrightarrow X^{\frac{n-i}{n}}\ \mbox{as}\ j\to\infty,\ \mbox{for}\ 2\leq i\leq n\\
A u_{1,j} \to \varphi_n\ \mbox{in}\ X\ \mbox{as}\ j\to\infty
\end{cases}
$$
and consequently, $u_i=-\varphi_{i-1}\in X^{\frac{n-i+1}{n}}$, for $2\leq i\leq n$. Finally, using the fact that $A$ is a closed operator, we have $u_1\in D(A)$ and $Au_1=\varphi_n$; that is, ${\bf u}\in D(\varLambda_n)$ and $\varLambda_n {\bf u}=\varphi$.

Secondly, $D(\varLambda_n)=D(A)\times D(A^{\frac{n-1}{n}})\times D(A^{\frac{n-2}{n}})\times\cdots\times D(A^{\frac{1}{n}})$ is dense in $Y=X^{\frac{n-1}{n}}\times X^{\frac{n-2}{n}}\times X^{\frac{n-3}{n}}\times\cdots\times X$ since the inclusions $X^{\alpha}\subset X^{\beta}$ are dense for $\alpha>\beta\geq 0$. 

Finally, since the operator $\varLambda_n$ is closed, $\lambda\in\rho(-\varLambda_n)$ if and only if the operator $\lambda I+\varLambda_n$ is bijective. From Lemma \ref{resoln} it follows easily that $[0,\infty)\subset \rho(-\varLambda_n)$. For ${\bf u}={\left[\begin{smallmatrix} u_1\\ u_2 \\ u_3\\ \vdots \\ u_n \end{smallmatrix}\right]}$, $\varphi={\left[\begin{smallmatrix} \varphi_1\\ \varphi_2 \\ \varphi_3\\ \vdots \\ \varphi_n \end{smallmatrix}\right]}$ in $Y$ and any $\lambda\geqslant0$ we have
\[
(\lambda I+\varLambda_n)^{-1}{\bf u}=\varphi,
\]
if and only if we have identities as \eqref{asad45tg} and \eqref{asad45tg2}.

Note that, for $1\leq i\leq n$, $\varphi_i$ is well defined since $\lambda^n\in\rho(-A)$. Moreover, $\varphi_i\in D(A^{\frac{n-i+1}{n}})$. In order to verify the equation \eqref{estres} for $\varLambda_n$,  it is sufficient to show that for $\|\mathbf{u}\|_Y\leq 1$ there exists a constant $K_{\varLambda_n}\geq 1$ such that
\begin{equation}
\|\varphi_1\|_{X^{\frac{n-1}{n}}}+\|\varphi_2\|_{X^{\frac{n-2}{n}}}+\|\varphi_3\|_{X^{\frac{n-3}{n}}}+\cdots+\|\varphi_n\|_{X} \leqslant \frac{K_{\varLambda_n}}{1+\lambda}
\end{equation}

Applying Lemma \ref{estpotb} with \eqref{asad45tg} and \eqref{asad45tg2} we obtain a constant $K\geqslant 1$ such that 
\[
\|\varphi_i\|_{X^{\frac{n-i}{n}}} \leqslant \frac{K_{\varLambda_n}}{1+\lambda},
\] 
for any $1\leqslant i\leqslant n$.
\qed 

\begin{remark}\label{sa3fbb}
Let $T:D(T)\subset E\to E$ be a linear operator  on some Banach space $E$, it is well known that if $-T$ generates a strongly continuous semigroup on $E$, this is also true for $-T^{\alpha}$ for $\alpha\in (0,1)$. However, what can one say about $-T^{\alpha}$ if $-T$ does not generate a strongly continuous semigroup? In general if $T$ is of positive type on $E$ (see Definition \ref{positive type}) then $-T^{\alpha}$ generates a strongly continuous analytic semigroup on $E$ for $0<\alpha\leq \frac{1}{2}$, see Amann \cite[Remark 4.6.12]{A}. 
\end{remark}
 
From now on  we study spectral properties of the fractional power operators $\varLambda_n^\alpha$ for any $0\leqslant\alpha\leqslant1$. 

A proof of the following result is given in Abramowitz and Stegun \cite{CA}.

\begin{lemma}
Let $U_n$ $n$th degree Chebyshev polynomial of the second kind defined in Theorem \ref{lem:useful:properties}. Then the following identities are holds.
\begin{equation}\label{A1as}
U_{n}(\cos \theta)\sin{\theta}=\sin((n+1)\theta),
\end{equation}
\begin{equation}\label{A3as}
U_n(-x)=(-1)^nU_n(x),
\end{equation}
and in particular,
\begin{equation}\label{A4as0}
U_n(1)=n+1,
\end{equation}
\begin{equation}\label{A4as}
U_n(-1)=(n+1)(-1)^n,
\end{equation}
for all $\theta\in\mathbb{R}$, $x\in\mathbb{C}$ and $n\geqslant0$.
\end{lemma}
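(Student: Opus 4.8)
The statement to prove consists of the four classical identities \eqref{A1as}--\eqref{A4as} for the Chebyshev polynomials of the second kind. I treat it as a self-contained lemma whose proof rests only on the recurrence $U_{n+1}(x)=2xU_n(x)-U_{n-1}(x)$ together with $U_0(x)=1$, $U_1(x)=2x$.

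\smallskip

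\noindent\textbf{Plan.} The plan is to establish \eqref{A1as} first, since the remaining three identities follow from it by specialization. I would prove \eqref{A1as} by induction on $n$. For $n=0$ it reads $\sin\theta=\sin\theta$, and for $n=1$ it reads $2\cos\theta\,\sin\theta=\sin(2\theta)$, which is the double-angle formula. For the inductive step, assume the identity holds for indices $n-1$ and $n$; multiplying the recurrence $U_{n+1}(\cos\theta)=2\cos\theta\,U_n(\cos\theta)-U_{n-1}(\cos\theta)$ by $\sin\theta$ and using the inductive hypotheses gives
\[
U_{n+1}(\cos\theta)\sin\theta=2\cos\theta\,\sin((n+1)\theta)-\sin(n\theta).
\]
Then I would invoke the product-to-sum formula $2\cos\theta\,\sin((n+1)\theta)=\sin((n+2)\theta)+\sin(n\theta)$ to cancel the $\sin(n\theta)$ term and conclude $U_{n+1}(\cos\theta)\sin\theta=\sin((n+2)\theta)$, completing the induction. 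Here one should note that \eqref{A1as} holds as an identity of polynomials composed with $\cos$, so it is valid for all real $\theta$; by continuity/polynomial identity it extends to the stated complex domain where needed.

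\smallskip

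\noindent\textbf{The parity identity \eqref{A3as}} I would again prove by induction on $n$, directly from the recurrence: $U_0(-x)=1=U_0(x)$ and $U_1(-x)=-2x=-U_1(x)$; assuming $U_{n-1}(-x)=(-1)^{n-1}U_{n-1}(x)$ and $U_n(-x)=(-1)^n U_n(x)$, the recurrence evaluated at $-x$ gives $U_{n+1}(-x)=-2x(-1)^nU_n(x)-(-1)^{n-1}U_{n-1}(x)=(-1)^{n+1}\big(2xU_n(x)-U_{n-1}(x)\big)=(-1)^{n+1}U_{n+1}(x)$.

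\smallskip

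\noindent\textbf{The boundary values.} For \eqref{A4as0}, $U_n(1)=n+1$, I would take the limit $\theta\to 0$ in \eqref{A1as}: since $\cos\theta\to 1$ and $\sin((n+1)\theta)/\sin\theta\to n+1$ (l'H\^opital, or the ratio of the derivatives at $0$), continuity of the polynomial $U_n$ yields $U_n(1)=n+1$. Alternatively one checks $U_n(1)=n+1$ by a one-line induction on the recurrence, which avoids the limiting argument entirely and is perhaps cleaner. Finally \eqref{A4as}, $U_n(-1)=(n+1)(-1)^n$, is immediate by combining \eqref{A3as} with \eqref{A4as0}: $U_n(-1)=(-1)^n U_n(1)=(-1)^n(n+1)$.

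\smallskip

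\noindent I do not anticipate a genuine obstacle here; all four identities are standard. The only point requiring a small amount of care is making sure the trigonometric identity \eqref{A1as}, proved for real $\theta$, is legitimately used to deduce \eqref{A4as0} at the specific point $x=1$ — which is handled either by the continuity argument above or, more robustly, by proving $U_n(1)=n+1$ directly from the recurrence so that no analytic continuation is invoked. Since the paper only ever applies \eqref{A4as0}--\eqref{A4as} at real arguments, the elementary inductive route is entirely sufficient.
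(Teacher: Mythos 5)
Your proof is correct in all its steps: the induction for \eqref{A1as} via the product-to-sum formula, the parity induction for \eqref{A3as}, and the derivation of the boundary values are all sound, and your remark about preferring the direct recurrence argument for $U_n(1)=n+1$ over the limiting argument is a sensible point of care. Note, however, that the paper does not actually prove this lemma at all — it simply cites Abramowitz and Stegun for these standard identities — so there is no in-paper argument to compare against. Your self-contained elementary proof is therefore strictly more than what the paper provides; the only thing it buys beyond the citation is independence from the reference, at the cost of about a page of routine induction. One very small caveat: \eqref{A1as} as an identity between entire functions of $\theta$ (the paper allows $x\in\mathbb{C}$ in \eqref{A3as} and implicitly composes with $\cos$ of complex arguments elsewhere) extends from real $\theta$ to complex $\theta$ by the identity theorem, which is the cleanest way to phrase the "continuity/polynomial identity" extension you gesture at.
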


\subsection{Proof of Theorem \ref{lem:useful:properties}: Fractional powers}

Part $i)$ immediately follows from the definition of $\varLambda_n$ and from fact that $\varLambda_n^{-1}$ takes bounded subsets of $Y$ into bounded subsets of $Y^1$, the latter space being compactly embedded in $Y$;

For part $ii)$ see \cite[Theorem $2.6.9$]{P};

Concerning part $iii)$ note that \eqref{asas324} is holds for $\alpha=0$ ($Y^0=Y$). Moreover, in this case, under  the  trigonometric identities, we have
\[
\frac{(-1)^{i-j}}{n}U_{n-1}\left(\cos\left(\frac{(i-j)\pi}{n}\right)\right)A^{\frac{i-j}{n}}=
\begin{cases}
I,& \mbox{if}\ i=j,\\
0,& \mbox{otherwise},
\end{cases}
\]
that is,
\[
 \left[\frac{(-1)^{i-j}}{n}U_{n-1}\left(\cos\left(\frac{(1+i-j)\pi}{n}\right)\right)A^{\frac{1+i-j}{n}}\right]_{ij}=\ \mbox{Identity operator on}\ Y,
\]
the identity \eqref{pota} is holds for $\alpha=0$.

Note that \eqref{asas324} is holds for $\alpha=1$, see \eqref{LamOp1}. Moreover, in this case, under  the  trigonometric identities, we have
\[
\frac{(-1)^{i-j}}{n}U_{n-1}\left(\cos\left(\frac{(1+i-j)\pi}{n}\right)\right)A^{\frac{1+i-j}{n}}=
\begin{cases}
-I,& \mbox{if}\ j=i+1,\\
A,& \mbox{if}\ i=n,\ j=1,\\
0,& \mbox{otherwise},
\end{cases}
\]
that is,
\[
 \left[\frac{(-1)^{i-j}}{n}U_{n-1}\left(\cos\left(\frac{(1+i-j)\pi}{n}\right)\right)A^{\frac{1+i-j}{n}}\right]_{ij}=\varLambda_n
\]
and consequently, by \eqref{LamOp2} the identity \eqref{pota} is holds for $\alpha=1$.

In the case $0<\alpha<1$, from \eqref{LamOp2} and \eqref{dasd4odd} we have
\begin{equation}\label{dasd4oddnew}
\varLambda_n(\lambda I+\varLambda_n)^{-1}=(\lambda^{n}I+A))^{-1} \cdot
 \left[\begin{smallmatrix}
A  &  -\lambda^{n-1} I  & -\lambda^{n-2}  I & \cdots & -\lambda^2 I & -\lambda I \\ \\ 
\lambda A & A  & -\lambda^{n-1} I & \cdots & -\lambda^3 I & -\lambda^2 I \\ \\ 
\lambda^2 A & \lambda A  & A & \cdots & -\lambda^4 I & -\lambda^3 I \\ \\ 
\vdots & \vdots & \vdots & \ddots & \vdots & \vdots\\ \\ 
\lambda^{n-2}A & \lambda^{n-3}A   &  \lambda^{n-4}A  & \cdots & A & -\lambda^{n-1} I \\ \\ 
\lambda^{n-1}A & \lambda^{n-2}A   &  \lambda^{n-3}A  & \cdots & \lambda A & A 
\end{smallmatrix}\right]
\end{equation}
for all $\lambda\in\rho(-\varLambda_n)$. 

In other words,
\begin{equation}
\varLambda_n(\lambda I+\varLambda_n)^{-1}=[e_{ij}],
\end{equation}
where
\[
e_{ij}=
\begin{cases}
\lambda^{i-j}A (\lambda^{n}I+A))^{-1},&\ \mbox{if}\ i\geqslant j;\\
-\lambda^{n+i-j}(\lambda^{n}I+A))^{-1},&\ \mbox{if}\ i<j;
\end{cases}
\]
for all $\lambda\in\rho(-\varLambda_n)$. 

Thus
\begin{equation}
\lambda^{\alpha-1}\varLambda_n(\lambda I+\varLambda_n)^{-1}=[E_{ij}],
\end{equation}
where
\[
E_{ij}=
\begin{cases}
\lambda^{\alpha+i-j-1}A ((-1)^{n}\lambda^{n}I+A))^{-1},&\ \mbox{if}\ i\geqslant j;\\
-\lambda^{\alpha+n+i-j-1}((-1)^{n}\lambda^{n}I+A))^{-1},&\ \mbox{if}\ i<j;\\
\end{cases}
\]
for all $\lambda\in\rho(-\varLambda_n)$ and $0<\alpha<1$. 

Now we apply in each entry $E_{ij}$ the fractional formula for $A$ given by \eqref{FracPower}, and after the change of variable $\mu=\lambda^{n}$ ($\lambda=\mu^{\frac{1}{n}}$ and $d\lambda=\frac{1}{n}\mu^{\frac{1-n}{n}}d\mu$) for $i\geqslant j$, we obtain the following: 
\begin{equation}\label{A1asneesa2}
\begin{split}
&\dfrac{\sin(\alpha \pi)}{\pi}\int_0^\infty   \lambda^{\alpha+i-j-1}A (\lambda^{n}I+A))^{-1}d\lambda\\
&=\dfrac{1}{n} \dfrac{\sin(\alpha \pi)}{\pi}\int_0^\infty \mu^{\frac{\alpha+i-j}{n}-1}A(\mu I+A)^{-1} d\mu\\
&=\dfrac{(-1)^{i-j}}{n}\dfrac{\sin\Big((\frac{\alpha+i-j}{n})n \pi\Big)}{\pi}\int_0^\infty \mu^{\frac{\alpha+i-j}{n}-1}A(\mu I+A)^{-1} d\mu,
\end{split}
\end{equation}
and by \eqref{A1as} we have 
\begin{equation}\label{A1asnee}
\sin\Big((\frac{\alpha+i-j}{n})n \pi\Big)=U_{n-1}\Big(\cos\Big((\frac{\alpha+i-j}{n}) \pi\Big)\Big)\sin\Big((\frac{\alpha+i-j}{n}) \pi\Big)
\end{equation}
and by \eqref{A1asneesa2}, \eqref{A1asnee} and $0<\alpha+i-j<n$, we have
\begin{equation}\label{A1as1214sa2}
(-1)^{i-j+1}\dfrac{\sin(\alpha \pi)}{\pi}\int_0^\infty   \lambda^{\alpha+i-j-1}A (\lambda^{n}I+A))^{-1}d\lambda=\dfrac{(-1)^{i-j}}{n}U_{n-1}\Big(\cos\Big((\frac{\alpha+i-j}{n}) \pi\Big)\Big)A^{\frac{\alpha+i-j}{n}}.
\end{equation}

Similarly, for $i< j$, we obtain the following:  
\begin{equation}\label{mlas335}
\begin{split}
&-\dfrac{\sin(\alpha \pi)}{\pi}\int_0^\infty  \lambda^{\alpha+n+i-j-1} (\lambda^{n}I+A))^{-1}d\lambda\\
&=-\dfrac{1}{n} \dfrac{\sin(\alpha \pi)}{\pi}\int_0^\infty \mu^{\frac{\alpha+i-j}{n}}(\mu I+A)^{-1} d\mu\\
&=\dfrac{(-1)^{i-j+n+1}}{n}\dfrac{\sin\Big((\frac{\alpha+i-j+n}{n})n \pi\Big)}{\pi}\int_0^\infty \mu^{\frac{\alpha+i-j+n}{n}-1}(\mu I+A)^{-1} d\mu,
\end{split}
\end{equation}
and by \eqref{A1as} we have 
\begin{equation}\label{434ff5y}
\begin{split}
\sin\Big((\frac{\alpha+i-j+n}{n})n \pi\Big)&=U_{n-1}\Big(\cos\Big((\frac{\alpha+i-j+n}{n}) \pi\Big)\Big)\sin\Big((\frac{\alpha+i-j+n}{n}) \pi\Big)\\
&=(-1)^{n-1}U_{n-1}\Big(\cos\Big((\frac{\alpha+i-j}{n}) \pi\Big)\Big)\sin\Big((\frac{\alpha+i-j+n}{n}) \pi\Big)
\end{split}
\end{equation}
and from \eqref{mlas335}, \eqref{434ff5y} and $0<\alpha+i-j+n<n$, we get
\begin{equation}\label{A1am92324}
(-1)^{n+i-j}\dfrac{\sin(\alpha \pi)}{\pi}\int_0^\infty  \lambda^{\alpha+n+i-j-1} (\lambda^{n}I+A))^{-1}d\lambda=\dfrac{(-1)^{i-j}}{n}U_{n-1}\Big(\cos\Big((\frac{\alpha+i-j}{n}) \pi\Big)\Big)A^{\frac{\alpha+i-j}{n}}.
\end{equation}
and thanks to \eqref{A1as1214sa2} and  \eqref{A1am92324} we get \eqref{pota}.

Finally, it is easy to see that the matrix representation of $\varLambda_n^\alpha$ is a Toeplitz matrix for any $0\leqslant\alpha\leqslant1$.
\qed

\begin{remark}
For $n=2$ we have the stationary wave-like operator, and  note that \eqref{asas324} is holds for $0<\alpha\leqslant1$, see \cite{BCCN}; namely
\[
\begin{split}
\varLambda_2^{\alpha}&= \left[\dfrac{(-1)^{i-j}}{2}U_{1}\left(\cos\left(\frac{(\alpha+i-j)\pi}{2}\right)\right)A^{\frac{\alpha+i-j}{2}}\right]_{ij}\\
&= \left[(-1)^{i-j}\cos\left(\frac{(\alpha+i-j)\pi}{2}\right)A^{\frac{\alpha+i-j}{2}}\right]_{ij}\\
&= \left[\begin{matrix}
\cos(\frac{\alpha\pi}{2})A^{\frac{\alpha}{2}} & -\sin(\frac{\alpha\pi}{2})A^{\frac{\alpha-1}{2}}\\
\sin(\frac{\alpha\pi}{2})A^{\frac{\alpha+1}{2}} & \cos(\frac{\alpha\pi}{2})A^{\frac{\alpha}{2}}
\end{matrix}\right];
\end{split}
\]

For $n=3$ we have the stationary Moore-Gibson-Thompson-like operator, and note that \eqref{asas324} is holds for $0<\alpha\leqslant1$, see \cite{BS}; namely
\[
\begin{split}
&\varLambda_3^{\alpha}= \dfrac{1}{3}\left[ (-1)^{i-j}U_{2}\left(cos\left(\frac{(\alpha+i-j)\pi}{3}\right)\right)A^{\frac{\alpha+i-j}{3}}\right]_{ij}\\
&=\dfrac{1}{3} \left[\begin{matrix}
(4\cos^2(\frac{\alpha\pi}{3})-1)A^{\frac{\alpha}{3}} & -(4\cos^2(\frac{(\alpha-1)\pi}{3})-1)A^{\frac{\alpha-1}{3}} & (4\cos^2(\frac{(\alpha-2)\pi}{3})-1)A^{\frac{\alpha-2}{3}}\\
-(4\cos^2(\frac{(\alpha+1)\pi}{3})-1)A^{\frac{\alpha+1}{3}}&(4\cos^2(\frac{\alpha\pi}{3})-1)A^{\frac{\alpha}{3}} & -(4\cos^2(\frac{(\alpha-1)\pi}{3})-1)A^{\frac{\alpha-1}{3}}\\
(4\cos^2(\frac{(\alpha+2)\pi}{3})-1)A^{\frac{\alpha+2}{3}} & -(4\cos^2(\frac{(\alpha+1)\pi}{3})-1)A^{\frac{\alpha+1}{3}} & (4\cos^2(\frac{\alpha\pi}{3})-1)A^{\frac{\alpha}{3}}
\end{matrix}\right]\\
&=\Big[\varLambda_{ij}\Big],
\end{split}
\]
where
\[
\varLambda_{ij}=
\begin{cases}
\frac{1}{3}(4\cos^2(\frac{\alpha\pi}{3})-1)A^{\frac{\alpha}{3}},&\mbox{if}\ i=j,\\
-\frac{1}{3}(2\sin^2(\frac{\alpha\pi}{3})+\sqrt{3}\sin(\frac{2\alpha\pi}{3}))A^{\frac{\alpha-1}{3}},&\mbox{if}\ j=i+1,\\
\frac{1}{3}(2\sin^2(\frac{\alpha\pi}{3})-\sqrt{3}\sin(\frac{2\alpha\pi}{3}))A^{\frac{\alpha-2}{3}},&\mbox{if}\ j=i+2,\\
-\frac{1}{3}(2\sin^2(\frac{\alpha\pi}{3})-\sqrt{3}\sin(\frac{2\alpha\pi}{3}))A^{\frac{\alpha-1}{3}},&\mbox{if}\ i=j-1,\\
\frac{1}{3}(2\sin^2(\frac{\alpha\pi}{3})+\sqrt{3}\sin(\frac{2\alpha\pi}{3}))A^{\frac{\alpha-2}{3}},&\mbox{if}\ i=j-2.
\end{cases}
\]
\end{remark}

\begin{definition}\label{positive type}
Let $E$ be a Banach space over a field $\mathbb{K}$ ($\mathbb{K}=\mathbb{R}$ or $\mathbb{K}=\mathbb{C}$), and let $T: D(T) \subset E \to E$ be a linear operator. We say that $T$ is  non-negative, in the sense of  Amann \cite{A} if $\rho(T)\supset [0,+\infty)$ and the set $\{\lambda (\lambda I+T)^{-1}\}$ is equicontinuous, i.e., 
\begin{equation}\label{estres}
\|(\lambda I+T)^{-1}\|_{\mathcal{L}(E)}\leqslant \frac{K}{\lambda}, \ \mbox{for all}\ \lambda\geqslant 0.
\end{equation}
\end{definition}

The next result is proved in \cite[Theorem 3.6]{MS}, see also \cite[Theorem 3.2]{MS} and \cite[Theorem 3.4]{MS}.

\begin{lemma}\label{LemmaMS}
Let $E$ be a Banach space over a field $\mathbb{K}$ ($\mathbb{K}=\mathbb{R}$ or $\mathbb{K}=\mathbb{C}$), and let $T: D(T) \subset E \to E$ be a linear closed operator densely defined. Assume that $T$ is non-negative, in the sense of Amann \cite{A}, and $\sigma(T)$ is not empty, then
\begin{equation}\label{ew34rf}
\sigma(T^\alpha)=\{z^\alpha; z\in\sigma(T)\}
\end{equation}
for any $0<\alpha<1$.  If $\sigma(T)$ is not empty, then the spectrum $\sigma(T)$ also is.
\end{lemma}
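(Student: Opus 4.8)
The plan is to establish the two set inclusions that together give the identity \eqref{ew34rf}, namely $\{z^\alpha : z\in\sigma(T)\}\subseteq\sigma(T^\alpha)$ and $\sigma(T^\alpha)\subseteq\{z^\alpha : z\in\sigma(T)\}$, using the holomorphic (Dunford--Riesz) functional calculus adapted to non-negative operators. Since $T$ is non-negative in the sense of Amann, its spectrum is contained in a sector $\overline{S_\phi}=\{z\in\mathbb{C}:|\arg z|\leqslant\phi\}$ with $\phi<\pi$, so the principal branch of $z\mapsto z^\alpha$ is holomorphic on an open sector containing $\sigma(T)$; this is precisely what makes the symbol $z^\alpha$ in \eqref{ew34rf} unambiguous. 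Throughout I would use that $T^\alpha$ coincides with the operator produced by the functional calculus applied to this branch, which is guaranteed by the Balakrishnan representation \eqref{FracPower} together with the resolvent estimate \eqref{estres}.

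For the inclusion $\{z^\alpha:z\in\sigma(T)\}\subseteq\sigma(T^\alpha)$, I would fix $z_0\in\sigma(T)$ and write
\[
z^\alpha-z_0^\alpha=(z-z_0)\,g(z),\qquad g(z):=\frac{z^\alpha-z_0^\alpha}{z-z_0},
\]
where the removable singularity at $z_0$ is filled in by $g(z_0)=\alpha z_0^{\alpha-1}$. The function $g$ is holomorphic on the sector and decays like $|z|^{\alpha-1}$ at infinity, so $g(T)\in\mathcal{L}(E)$ and the factorization $T^\alpha-z_0^\alpha I=g(T)(T-z_0I)=(T-z_0I)g(T)$ holds on $D(T)$. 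If one assumed $z_0^\alpha\in\rho(T^\alpha)$, then $(z_0^\alpha I-T^\alpha)^{-1}$ would furnish both a bounded left and a bounded right inverse of $z_0I-T$, forcing $z_0\in\rho(T)$ and contradicting $z_0\in\sigma(T)$. Hence $z_0^\alpha\in\sigma(T^\alpha)$.

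For the reverse inclusion I would argue by constructing the resolvent directly. Let $\mu\notin\{z^\alpha:z\in\sigma(T)\}$; the aim is to exhibit $(\mu I-T^\alpha)^{-1}$ as $h(T)$, where $h(z):=(\mu-z^\alpha)^{-1}$. By hypothesis $h$ has no poles on $\sigma(T)$, and since $z^\alpha\to\infty$ as $z\to\infty$ one has $h(z)\sim -z^{-\alpha}\to0$, so $h$ is admissible for the calculus and $h(T)$ is bounded; the product rules of the functional calculus then give $(\mu I-T^\alpha)h(T)=h(T)(\mu I-T^\alpha)=I$, i.e. $\mu\in\rho(T^\alpha)$. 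Taking contrapositives yields $\sigma(T^\alpha)\subseteq\{z^\alpha:z\in\sigma(T)\}$, completing \eqref{ew34rf}.

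The hard part will not be the algebra of the two inclusions but the rigorous justification of the functional-calculus manipulations for an \emph{unbounded} non-negative operator, where the symbols $z^\alpha$ and $h(z)$ must be controlled simultaneously at the origin and at infinity; this is exactly the delicate point handled by the extended calculus of Mart\'inez and Sanz, and it is the reason I would invoke \cite[Theorems 3.2, 3.4 and 3.6]{MS} rather than reprove convergence of the defining integrals from scratch. One convenient device to localize the difficulty at infinity is to pass to the bounded non-negative operator $(I+T)^{-1}$, apply the classical spectral mapping theorem for the bounded holomorphic calculus, and transport the conclusion back through $z\mapsto(1+z)^{-1}$, the only remaining obstruction being the point $w=0$ (corresponding to $z=\infty$) at which the transporting symbol fails to be holomorphic. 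Finally, the closing assertion about nonemptiness is immediate from \eqref{ew34rf}: if $\sigma(T)\neq\emptyset$ then $\{z^\alpha:z\in\sigma(T)\}\neq\emptyset$, whence $\sigma(T^\alpha)\neq\emptyset$ as well.
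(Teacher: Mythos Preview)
The paper does not actually prove this lemma: immediately preceding the statement it reads ``The next result is proved in \cite[Theorem 3.6]{MS}, see also \cite[Theorem 3.2]{MS} and \cite[Theorem 3.4]{MS},'' and no argument is supplied. Your proposal therefore goes strictly beyond what the paper does, and in fact you end up invoking the very same Mart\'inez--Sanz references for the delicate analytic points.

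As for the sketch itself, the strategy is the standard and correct one for spectral mapping theorems of this type: factor $z^\alpha - z_0^\alpha = (z-z_0)g(z)$ to obtain the inclusion $\{z^\alpha : z\in\sigma(T)\}\subseteq\sigma(T^\alpha)$, and build the resolvent $(\mu I - T^\alpha)^{-1}$ as $h(T)$ with $h(z)=(\mu - z^\alpha)^{-1}$ for the reverse inclusion. You are also right that the genuine work lies in checking that $g$ and $h$ belong to the admissible class for the extended holomorphic calculus of a non-negative (possibly unbounded, possibly non-invertible) operator, controlling behaviour both at $0$ and at $\infty$; this is precisely what \cite{MS} supplies. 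One small remark: in the first inclusion you should separate the case $z_0=0$, since then $g(z)=z^{\alpha-1}$ is singular at the origin and the factorization argument needs the observation that $0\in\sigma(T)\Leftrightarrow 0\in\sigma(T^\alpha)$ (this is exactly \cite[Theorem 3.4]{MS}, restated as Lemma~\ref{LemmaMS02} in the paper).
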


The following result may be established by a straightforward extension of \cite[Theorem 3.6]{MS} so we omit its proof.

\begin{lemma}\label{corpos222}
The unbounded linear operator $\varLambda_n$ defined in \eqref{LamOp1}-\eqref{LamOp2} is such that \eqref{ew34rf} holds for any $0<\alpha<1$.  
\end{lemma}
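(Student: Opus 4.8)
The plan is to obtain \eqref{ew34rf} by reading off the hypotheses of Lemma \ref{LemmaMS} for $T=\varLambda_n$ and, where the statement of Lemma \ref{LemmaMS} as quoted from \cite{MS} falls slightly short, to close the gap with the standing assumption that $A$ (hence, by Theorem \ref{lem:useful:properties}$(i)$, $\varLambda_n$) has compact resolvent. First, Lemma \ref{corpos} already gives that $\varLambda_n$ is closed, densely defined and of positive type $K\geqslant 1$; by Lemma \ref{resoln} this yields $[0,\infty)\subset\rho(-\varLambda_n)$, i.e. $\sigma(\varLambda_n)\cap(-\infty,0]=\emptyset$, so the principal branch $z\mapsto z^{\alpha}$ is holomorphic on a neighbourhood of $\sigma(\varLambda_n)$ and $\varLambda_n^{\alpha}$ is the operator produced by the Balakrishnan formula \eqref{eq:to-compute-fractional-powers}. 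Second, $\sigma(\varLambda_n)\neq\emptyset$: by Lemma \ref{resoln}, $\sigma(-\varLambda_n)=\{\lambda\in\mathbb{C}:\lambda^{n}\in\sigma(-A)\}$, and $\sigma(-A)=\{-\mu_{j}:j\in\mathbb{N}\}\neq\emptyset$, so even the point spectrum of $\varLambda_n$ is nonempty (it is listed in \eqref{eq:neigenvaluesLimit}). These are precisely the data under which \cite[Theorem 3.6]{MS}, combined with \cite[Theorems 3.2 and 3.4]{MS}, delivers $\sigma(\varLambda_n^{\alpha})=\{z^{\alpha}:z\in\sigma(\varLambda_n)\}$ for $0<\alpha<1$.

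One inclusion can in fact be checked by hand and is worth isolating. If $\varLambda_n{\bf u}=z{\bf u}$ with ${\bf u}\neq 0$, then feeding this into \eqref{eq:to-compute-fractional-powers} gives
\[
\varLambda_n^{\alpha}{\bf u}=\frac{\sin(\alpha\pi)}{\pi}\int_{0}^{\infty}\lambda^{\alpha-1}\frac{z}{\lambda+z}\,d\lambda\;{\bf u}=z^{\alpha}{\bf u},
\]
since $\tfrac{\sin(\alpha\pi)}{\pi}\int_{0}^{\infty}\lambda^{\alpha-1}(\lambda+z)^{-1}z\,d\lambda=z^{\alpha}$ for every $z\in\mathbb{C}\setminus(-\infty,0]$. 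Hence $\{z^{\alpha}:z\in\sigma_{p}(\varLambda_n)\}\subset\sigma_{p}(\varLambda_n^{\alpha})$, and because $\varLambda_n$ has compact resolvent its spectrum is all point spectrum, so this is already the inclusion ``$\supset$'' in \eqref{ew34rf}.

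The reverse inclusion is where the announced ``straightforward extension'' of \cite[Theorem 3.6]{MS} is needed, and I expect this to be the only real obstacle: \cite{MS} is phrased for operators that are non-negative in the sense of Definition \ref{positive type} (with $[0,\infty)\subset\rho(T)$), whereas for odd $n$ the operator $\varLambda_n$ has eigenvalues $\mu_{j}^{1/n}$ on the positive real axis (see \eqref{eq:neigenvaluesLimit} with $k=\tfrac{n-1}{2}$) and is only of positive type. The extension amounts to noting that the proof of \cite[Theorem 3.6]{MS} uses only that $\sigma(\varLambda_n)$ avoids the branch cut $(-\infty,0]$ together with the decay $\|(\lambda I+\varLambda_n)^{-1}\|_{\mathcal{L}(Y)}\leqslant K(1+\lambda)^{-1}$ for $\lambda\geqslant 0$, both granted by Lemma \ref{corpos}. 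Concretely, one may instead argue directly: $0\in\rho(\varLambda_n)$ and the compact resolvent hypothesis make $\varLambda_n^{-1}\in\mathcal{L}(Y)$ compact; the integral formula for $\varLambda_n^{-\alpha}=(\varLambda_n^{-1})^{\alpha}$ then exhibits it as a norm-convergent integral of compact operators, so $\varLambda_n^{-\alpha}$ is compact and $\varLambda_n^{\alpha}=(\varLambda_n^{-\alpha})^{-1}$ again has compact resolvent. Decomposing $Y$ along the Riesz projections of $\varLambda_n^{-1}$ into its finite-dimensional generalized eigenspaces (which are invariant under the commuting operator $\varLambda_n^{-\alpha}$), on the generalized eigenspace attached to $z^{-1}$ the operator $\varLambda_n^{-\alpha}$ has the single eigenvalue $(z^{-1})^{\alpha}=z^{-\alpha}$; since $w\mapsto w^{\alpha}$ is injective on $\mathbb{C}\setminus(-\infty,0]\supset\sigma(\varLambda_n^{-1})$, this matches eigenvalues bijectively and yields $\sigma(\varLambda_n^{-\alpha})=\{z^{-\alpha}:z\in\sigma(\varLambda_n)\}\cup\{0\}$, equivalently $\sigma(\varLambda_n^{\alpha})=\{z^{\alpha}:z\in\sigma(\varLambda_n)\}$. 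The only delicate bookkeeping is the consistency of the branch, i.e. $(z^{-1})^{\alpha}=z^{-\alpha}$ for $z\in\sigma(\varLambda_n)$, which holds because $\arg z\in(-\pi,\pi)$ forces $\arg(z^{-1})=-\arg z\in(-\pi,\pi)$.
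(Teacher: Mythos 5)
Your proposal is correct in substance, but it takes a genuinely different route from the paper: the paper gives no argument at all for Lemma \ref{corpos222}, stating only that it ``may be established by a straightforward extension of \cite[Theorem 3.6]{MS}'' and omitting the proof, whereas you supply a self-contained argument. Your two halves are both sound: the inclusion $\{z^{\alpha}:z\in\sigma_p(\varLambda_n)\}\subset\sigma_p(\varLambda_n^{\alpha})$ via the scalar identity $\tfrac{\sin(\alpha\pi)}{\pi}\int_0^{\infty}\lambda^{\alpha-1}z(\lambda+z)^{-1}\,d\lambda=z^{\alpha}$ on $\mathbb{C}\setminus(-\infty,0]$ is a standard and clean computation, and the reverse inclusion via compactness of $\varLambda_n^{-\alpha}=(\varLambda_n^{-1})^{\alpha}$ (a norm-convergent integral of compact operators) plus Riesz projections, or equivalently a common-eigenvector argument on the finite-dimensional eigenspaces $\ker(\varLambda_n^{-\alpha}-w)$, is legitimate. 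What each approach buys: yours is elementary and verifiable line by line, but it leans on the standing assumption that $A$ (hence $\varLambda_n$) has compact resolvent, so that $\sigma(\varLambda_n)$ is pure point spectrum; the Mart\'{\i}nez--Sanz route, had the paper carried it out, would give \eqref{ew34rf} for any non-negative operator with nonempty spectrum, compact resolvent or not. One caveat on your framing: the obstruction you identify (eigenvalues $\mu_j^{1/n}$ of $\varLambda_n$ on the positive real axis for odd $n$) is an artifact of the paper's garbled Definition \ref{positive type} (the condition should read $(-\infty,0]\subset\rho(T)$, equivalently $[0,\infty)\subset\rho(-T)$, which Lemma \ref{corpos} does establish); spectrum on the positive half-line is perfectly compatible with non-negativity in the sense of \cite{MS}, so the ``extension'' you anticipate is likely not needed at all, and your direct argument serves as a replacement rather than a repair.
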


\subsection{Proof of Theorem \ref{0intervalowp0}: Eingenvalues}

Thanks to Lemma \ref{corpos222} we can conclude that for each $n\geqslant 2$ the linear operator $-\varLambda_n$ is non-negative, in the sense of Amann \cite{A}. We also know that $\sigma(-\varLambda_n)$ is not empty, and by \eqref{eq:neigenvaluesLimit} the spectrum of $-\varLambda_n$ is such that the point spectrum consisting of eigenvalues 
\[
\bigcup_{k=0}^{\lfloor \frac{n-1}{2}\rfloor}\left(\left\{\mu_{j}^{\frac{1}{n}}e^{ \frac{i\pi (2k+1)}{n}}:\ j\in\mathbb{N}\right\}\cup\left\{\mu_{j}^{\frac{1}{n}}e^{ \frac{i\pi (2n-2k-1)}{n}}:\ j\in\mathbb{N}\right\}\right)
\]
where $\{\mu_j\}_{j\in\mathbb{N}}$ denotes the ordered sequence of eigenvalues of $A$ including their multiplicity and $\lfloor x\rfloor := \max \{z\in \mathbb{Z}\ |\  z\leq x\}$ (this implies \eqref{eq:neigenvalues}  holds for $\alpha=1$). Consequently, by \eqref{eq:neigenvaluesLimit} and Lemma \ref{LemmaMS} we have for each $0<\alpha<1$ the spectrum of $\varLambda_n^{\alpha}$ is such that the point spectrum consisting of eigenvalues  
\[
\bigcup_{k=0}^{\lfloor \frac{n-1}{2}\rfloor}\left(\left\{\mu_{j}^{\frac{\alpha}{n}}e^{ -i \frac{\pi  (n-2k-1)\alpha}{n}}:\ j\in\mathbb{N}\right\}\cup\left\{\mu_{j}^{\frac{\alpha}{n}}e^{ i \frac{\pi (n-2k-1)\alpha}{n}}:\ j\in\mathbb{N}\right\}\right)
\]
where $\{\mu_j\}_{j\in\mathbb{N}}$ denotes the ordered sequence of eigenvalues of $A$ including their multiplicity and $\lfloor x\rfloor := \max \{z\in \mathbb{Z}\ |\  z\leq x\}$, which implies that for each $0<\alpha<1$ the spectrum of $-\varLambda_n^{\alpha}$ is such that the point spectrum consisting of eigenvalues  
\[
\bigcup_{k=0}^{\lfloor \frac{n-1}{2}\rfloor}\left(\left\{\mu_{j}^{\frac{\alpha}{n}}e^{ i \frac{\pi (n- (n-2k-1)\alpha)}{n}}:\ j\in\mathbb{N}\right\}\cup\left\{\mu_{j}^{\frac{\alpha}{n}}e^{ i \frac{\pi (n+(n-2k-1)\alpha)}{n}}:\ j\in\mathbb{N}\right\}\right)
\]
where $\{\mu_j\}_{j\in\mathbb{N}}$ denotes the ordered sequence of eigenvalues of $A$ including their multiplicity and $\lfloor x\rfloor := \max \{z\in \mathbb{Z}\ |\  z\leq x\}$
\qed

\begin{remark}
We include below figures which reflects, in particular, the loss of a ``good'' sectoriality property as $\alpha\nearrow \frac{3}{4}$ for $n=3$, the loss of a ``good'' sectoriality property as $\alpha\nearrow \frac{2}{3}$ for $n=4$, the loss of a ``good'' sectoriality property as $\alpha\nearrow \frac{5}{8}$ for $n=5$. 

Moreover, e.g. for $\frac{3}{4}<\alpha\leqslant1$ the figures \ref{fig01}, \ref{fig02} and \ref{fig03} reflects, in particular, the loss of well-posedness of the Cauchy problem \eqref{mainprob} for $n=3$; for $\frac{2}{3}<\alpha\leqslant1$ the figures \ref{fig04} and \ref{fig05} reflects, in particular, the loss of well-posedness of the Cauchy problem \eqref{mainprob} for $n=4$; for $\frac{5}{8}<\alpha\leqslant1$ the figures \ref{fig06} and \ref{fig07} reflects, in particular, the loss of well-posedness of the Cauchy problem \eqref{mainprob} for $n=5$;

\begin{figure}[H]
\begin{center}
\begin{tikzpicture}
\draw[-stealth', densely dotted] (0,0) -- (3.15,0) node[below] {\ \ \ \ \ \ \ $\scriptstyle {\rm Re} (\lambda)$};
\draw[-stealth', densely dotted ] (0,-3.15) -- (0,3.15) node[left] {\color{black}$\scriptstyle{\rm Im} (\lambda)$};
\draw[densely dotted ] (0.7,-3.15) -- (0.7,3.15);
\fill[gray!10!] (0.7,-3.15) rectangle (3.15,-0.02);
\fill[gray!10!] (0.7,0.02) rectangle (3.15,3.15);
\node at (-4.5,2.1) {{\tiny\color{blue} Semi-line containing the eigenvalues of $\varLambda_3^{\frac{3}{4}}$}};
\draw[color=blue,->, densely dotted] (-4,1.8) -- (-0.1,-1.3);
\draw[color=blue,->, densely dotted] (-4,1.8) -- (-0.1,1.3);
\draw[color=blue,->,densely dotted] (-4,1.8) -- (-1.5,0.1);
\draw[color=blue, line width=1pt] (0,3.15) -- (0,0);
\draw[color=blue, line width=1pt] (0,-3.15) -- (0,0);
\draw[color=blue, line width=1pt] (-3.15,0.02) -- (0,0.02);
\node at (0.5,-0.2) {\color{blue}{\tiny $\omega$}};
\node at (-3,-0.3) {\color{blue}{\tiny $\mu_n^{\frac{1}{4}}e^{i\pi}$}};
\node at (0.7,2.8) {\color{blue}{\tiny $\mu_n^{\frac{1}{4}}e^{i\frac{\pi}{2}}$}};
\node at (0.7,-2.8) {\color{blue}{\tiny $\mu_n^{\frac{1}{4}}e^{i\frac{3\pi}{2}}$}};
\end{tikzpicture}
\end{center}
\caption{Location of the eigenvalues for  $\alpha=\frac{3}{4}$ with $n=3$.}\label{fig02}
\end{figure}
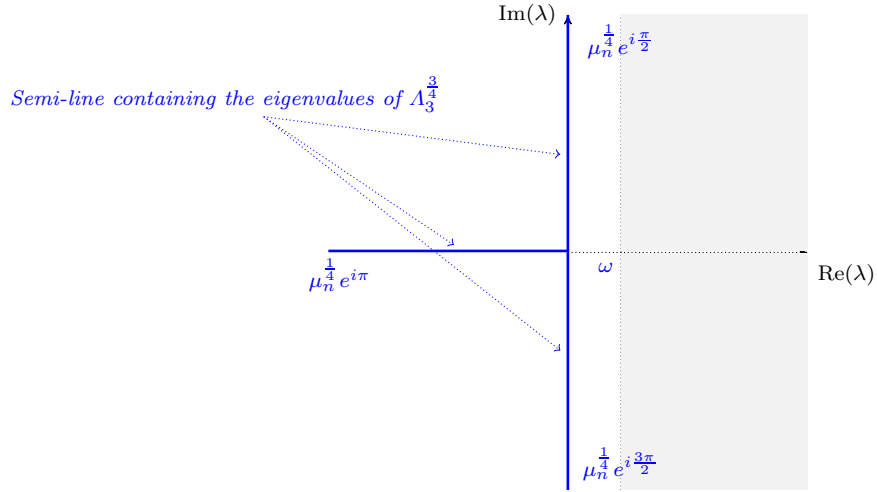

\begin{figure}[H]
\begin{center}
\begin{tikzpicture}
\draw[-stealth', densely dotted] (0,0) -- (3.15,0) node[below] {\ \ \ \ \ \ \ $\scriptstyle {\rm Re} (\lambda)$};
\draw[-stealth', densely dotted ] (0,-3.15) -- (0,3.15) node[left] {\color{black}$\scriptstyle{\rm Im} (\lambda)$};
\draw[densely dotted ] (0.7,-3.15) -- (0.7,3.15);
\fill[gray!10!] (0.7,-3.15) rectangle (3.15,-0.02);
\fill[gray!10!] (0.7,0.02) rectangle (3.15,3.15);
\node at (-4.5,-2.1) {{\tiny\color{blue} Semi-line containing the eigenvalues of $\varLambda_3^{\frac{1}{2}}$}};
\draw[color=blue,->, densely dotted] (-4,-1.8) -- (-1.2,-1.3);
\draw[color=blue,->, densely dotted] (-4,-1.8) -- (-1.2,1.3);
\draw[color=blue,->,densely dotted] (-4,-1.8) -- (-1.5,-0.1);
\draw[color=blue, line width=1pt] (-3.15,-0.02) -- (0,-0.02);
\draw[color=blue, line width=1pt] (-2,2.8) -- (0,0);
\draw[color=blue, line width=1pt] (-2,-2.8) -- (0,0);
\node at (0.5,-0.2) {\color{blue}{\tiny $\omega$}};
\node at (-1.0,2.6) {\color{blue}{\tiny $\mu_n^{\frac{1}{6}}e^{i\frac{2\pi}{3}}$}};
\node at (-4,-0.3) {\color{blue}{\tiny $\mu_n^{\frac{1}{6}}e^{i\pi}$}};
\node at (-1,-2.4) {\color{blue}{\tiny $\mu_n^{\frac{1}{6}}e^{i\frac{4\pi}{3}}$}};
\end{tikzpicture}
\end{center}
\caption{Location of the eigenvalues for  $\alpha=\frac{1}{2}$ with $n=3$.}\label{fig03}
\end{figure}

\begin{figure}[H]
\begin{center}
\begin{tikzpicture}
\draw[-stealth', densely dotted] (-3.15,0) -- (3.15,0) node[below] {\ \ \ \ \ \ \ $\scriptstyle {\rm Re} (\lambda)$};
\draw[-stealth', densely dotted ] (0,-3.15) -- (0,3.15) node[left] {\color{black}$\scriptstyle{\rm Im} (\lambda)$};
\draw[densely dotted ] (0.7,-3.15) -- (0.7,3.15);
\fill[gray!10!] (0.7,-3.15) rectangle (3.15,-0.02);
\fill[gray!10!] (0.7,0.02) rectangle (3.15,3.15);
\node at (-5,-2.1) {{\tiny\color{blue} Semi-line containing the eigenvalues of $\varLambda_4^{\frac23}$}};
\draw[color=blue,->, densely dotted] (-4,-1.8) -- (-1.4,-0.6);
\draw[color=blue,->, densely dotted] (-4,-1.8) -- (-0.2,-1.3);
\draw[color=blue,->, densely dotted] (-4,-1.8) -- (-1.4,0.6);
\draw[color=blue,->, densely dotted] (-4,-1.8) -- (-0.2,1.3);
\draw[color=blue, line width=1pt] (-2.5,1.3) -- (0,0);
\draw[color=blue, line width=1pt] (0,2.5) -- (0,0);
\draw[color=blue, line width=1pt] (-2.5,-1.3) -- (0,0);
\draw[color=blue, line width=1pt] (0,-2.5) -- (0,0);
\node at (0.5,-0.2) {\color{blue}{\tiny $\omega$}};
\node at (-1.6,1.4) {\color{blue}{\tiny $\mu_j^{\frac16}e^{i\frac{5\pi}{6}}$}};
\node at (0.6,2.6) {\color{blue}{\tiny $\mu_j^{\frac16}e^{i\frac{\pi}{2}}$}};
\node at (-1.6,-1.3) {\color{blue}{\tiny $\mu_j^{\frac16}e^{i\frac{7\pi}{6}}$}};
\node at (0.6,-2.4) {\color{blue}{\tiny $\mu_j^{\frac16}e^{i\frac{3\pi}{2}}$}};
\end{tikzpicture}
\end{center}
\caption{Location of the eigenvalues for  $\alpha=\frac{2}{3}$ with $n=4$.}\label{fig05}
\end{figure}

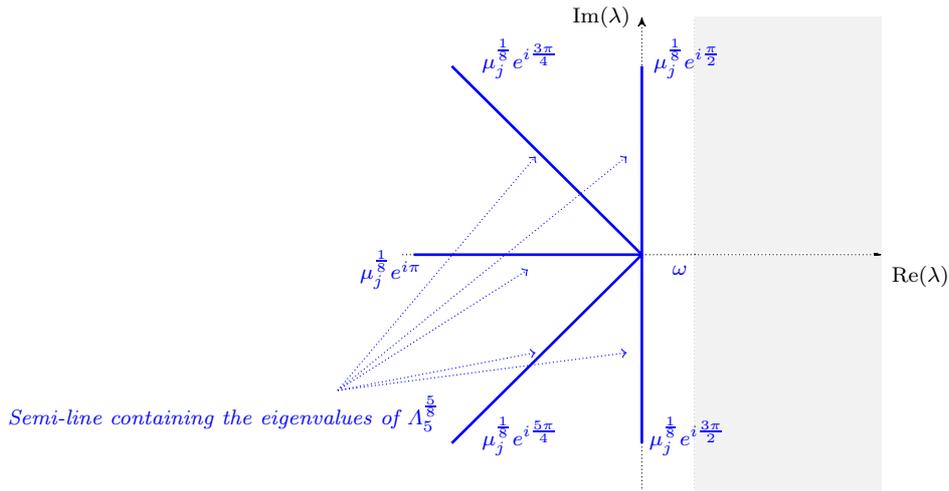
\begin{figure}[H]
\begin{center}
\begin{tikzpicture}
\draw[-stealth', densely dotted] (-3.15,0) -- (3.15,0) node[below] {\ \ \ \ \ \ \ $\scriptstyle {\rm Re} (\lambda)$};
\draw[-stealth', densely dotted ] (0,-3.15) -- (0,3.15) node[left] {\color{black}$\scriptstyle{\rm Im} (\lambda)$};
\draw[densely dotted ] (0.7,-3.15) -- (0.7,3.15);
\fill[gray!10!] (0.7,-3.15) rectangle (3.15,-0.02);
\fill[gray!10!] (0.7,0.02) rectangle (3.15,3.15);
\node at (-5.5,-2.1) {{\tiny\color{blue} Semi-line containing  the eigenvalues of $\varLambda_5^{\frac58}$}};
\draw[color=blue,->, densely dotted] (-4,-1.8) -- (-1.4,-1.3);
\draw[color=blue,->, densely dotted] (-4,-1.8) -- (-0.2,-1.3);
\draw[color=blue,->, densely dotted] (-4,-1.8) -- (-1.4,1.3);
\draw[color=blue,->, densely dotted] (-4,-1.8) -- (-0.2,1.3);
\draw[color=blue,->, densely dotted] (-4,-1.8) -- (-1.5,-0.2);
\draw[color=blue, line width=1pt] (-2.5,2.5) -- (0,0);
\draw[color=blue, line width=1pt] (0,2.5) -- (0,0);
\draw[color=blue, line width=1pt] (-3,0) -- (0,0);
\draw[color=blue, line width=1pt] (-2.5,-2.5) -- (0,0);
\draw[color=blue, line width=1pt] (0,-2.5) -- (0,0);
\node at (0.5,-0.2) {\color{blue}{\tiny $\omega$}};
\node at (-1.6,2.6) {\color{blue}{\tiny $\mu_j^{\frac18}e^{i\frac{3\pi}{4}}$}};
\node at (0.6,2.6) {\color{blue}{\tiny $\mu_j^{\frac18}e^{i\frac{\pi}{2}}$}};
\node at (-3.3,-0.2) {\color{blue}{\tiny $\mu_j^{\frac18}e^{i \pi}$}};
\node at (-1.6,-2.4) {\color{blue}{\tiny $\mu_j^{\frac18}e^{i\frac{5\pi}{4}}$}};
\node at (0.6,-2.4) {\color{blue}{\tiny $\mu_j^{\frac18}e^{i\frac{3\pi}{2}}$}};
\end{tikzpicture}
\end{center}
\caption{Location of the eigenvalues for  $\alpha=\frac{5}{8}$ with $n=5$.}\label{fig07}
\end{figure}
\end{remark}

The next result is proved in \cite[Theorem 3.4]{MS}.

\begin{lemma}\label{LemmaMS02}
Let $E$ be a Banach space over a field $\mathbb{K}$ ($\mathbb{K}=\mathbb{R}$ or $\mathbb{K}=\mathbb{C}$), and let $T: D(T) \subset E \to E$ be a linear closed operator densely defined. Assume that $T$ is non-negative, in the sense of Amann \cite{A}, then $0\in\rho(T)$ if and only if $0\in\rho(T^\alpha)$  for any $0<\alpha<1$. 
\end{lemma}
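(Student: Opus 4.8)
The plan is to derive the equivalence from the spectral mapping property for fractional powers, Lemma~\ref{LemmaMS}, combined with an explicit integral formula for $T^{-\alpha}$; the only situation not covered directly by Lemma~\ref{LemmaMS} is $\sigma(T)=\emptyset$, and this case must be peeled off at the start.

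First I would prove the implication ``$0\in\rho(T)\Rightarrow 0\in\rho(T^{\alpha})$'', which also disposes of the degenerate case. If $0\in\rho(T)$, then since $T$ is non-negative its resolvent is continuous (hence bounded) near $0$, so in fact $\|(\lambda I+T)^{-1}\|_{\mathcal{L}(E)}\le K/(1+\lambda)$ for all $\lambda\ge 0$, i.e.\ $T$ is of positive type. Consequently the Bochner integral
\[
T^{-\alpha}:=\frac{\sin(\alpha\pi)}{\pi}\int_0^\infty \lambda^{-\alpha}(\lambda I+T)^{-1}\,d\lambda
\]
converges in $\mathcal{L}(E)$: its integrand is dominated by $K\lambda^{-\alpha}(1+\lambda)^{-1}$, which is integrable on $(0,\infty)$ precisely because $0<\alpha<1$ (the restriction $\alpha<1$ is what makes $\lambda^{-\alpha}$ integrable at the origin; integrability at infinity is automatic). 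Since $T^{\alpha}=(T^{-\alpha})^{-1}$ and $T^{-\alpha}$ is a two-sided inverse of $T^{\alpha}$ by the standard laws for fractional powers, $T^{\alpha}$ is boundedly invertible, i.e.\ $0\in\rho(T^{\alpha})$. If $\sigma(T)=\emptyset$ then $0\in\rho(T)$ a fortiori, so by what was just shown both sides of the equivalence hold.

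For the remaining case $\sigma(T)\neq\emptyset$ I would invoke Lemma~\ref{LemmaMS}: $\sigma(T^{\alpha})=\{z^{\alpha}:z\in\sigma(T)\}$, where $z\mapsto z^{\alpha}$ is the principal branch. Since $T$ is non-negative, $\sigma(T)$ lies in a closed sector $\{z:|\arg z|\le\theta\}\cup\{0\}$ with $\theta<\pi$, and on that set $z^{\alpha}=0$ if and only if $z=0$. Therefore $0\in\sigma(T^{\alpha})\Leftrightarrow 0\in\sigma(T)$, equivalently $0\in\rho(T^{\alpha})\Leftrightarrow 0\in\rho(T)$; together with the previous paragraph this is the assertion.

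The step needing the most care is the analysis of the integral defining $T^{-\alpha}$ at $\lambda=0$ and the verification that $T^{-\alpha}T^{\alpha}=T^{\alpha}T^{-\alpha}=I$ — this is exactly where the passage from ``non-negative'' to ``positive type'' (available once $0\in\rho(T)$) and the constraint $0<\alpha<1$ enter. If one prefers a proof of the converse implication independent of Lemma~\ref{LemmaMS}, one can argue instead that $S:=T^{\alpha}$ is non-negative with $0\in\rho(S)$, hence of positive type, and, writing $1/\alpha=N+\theta$ with $N=\lfloor 1/\alpha\rfloor\ge 1$ and $\theta\in[0,1)$, use the reiteration identity $T=(T^{\alpha})^{1/\alpha}=S^{N}S^{\theta}$ to exhibit $T$ as a composition of the boundedly invertible operators $S^{N}$ (with inverse $(S^{-1})^{N}$) and $S^{\theta}$ (with inverse $S^{-\theta}$, bounded by the first step applied to $S$). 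Here the delicate bookkeeping is the validity of the composition law and the matching of the domains of $S^{1/\alpha}$ and $S^{N}S^{\theta}$, since the exponent $1/\alpha$ exceeds $1$.
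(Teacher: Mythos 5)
The paper does not actually prove this lemma: it is imported verbatim from Mart\'inez and Sanz \cite[Theorem 3.4]{MS}, so there is no in-paper argument to compare yours against, and your proposal should be judged on its own merits. As such it is correct within the paper's logical framework. The forward implication is the genuinely self-contained part: once $0\in\rho(T)$ upgrades non-negativity to positivity (boundedness of the resolvent on a neighbourhood of $\lambda=0$ together with the decay $K/\lambda$ at infinity yields $K/(1+\lambda)$ on all of $[0,\infty)$), the Balakrishnan integral for $T^{-\alpha}$ converges absolutely in $\mathcal{L}(E)$ and is the two-sided inverse of $T^{\alpha}$, whence $0\in\rho(T^{\alpha})$; this also disposes of the case $\sigma(T)=\emptyset$. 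The converse follows by citing Lemma \ref{LemmaMS} and observing that $z\mapsto z^{\alpha}$ vanishes only at $z=0$ on the sector containing $\sigma(T)$. Two caveats are worth recording. First, your route makes Lemma \ref{LemmaMS02} a corollary of Lemma \ref{LemmaMS}; that is legitimate inside this paper, where both are taken as black boxes, but it inverts the logical order of the source: in \cite{MS} the statement about the point $0$ (their Theorem 3.4) is established before, and feeds into, the full spectral mapping theorem (their Theorem 3.6), so your argument could not replace theirs without circularity. Your sketched alternative via the reiteration $T=(T^{\alpha})^{1/\alpha}=S^{N}S^{\theta}$ is closer in spirit to a primary proof of the converse, but, as you yourself flag, the composition law for exponents exceeding $1$ and the matching of domains are left unverified there. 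Second, you are implicitly (and correctly) using the standard definition of non-negativity, namely $(-\infty,0)\subset\rho(T)$ with $\sup_{\lambda>0}\|\lambda(\lambda I+T)^{-1}\|_{\mathcal{L}(E)}<\infty$; the paper's own Definition as printed ($\rho(T)\supset[0,+\infty)$) would already force $0\in\rho(T)$ and trivialize the lemma, so your reading is the only one under which the statement has content.
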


\section{Fractional differential equations}\label{Fractional differential equations}

In this section we recall the method of reduction of the coupled system \eqref{mainprobalpha} to an $n$th order differential equation. For this we use  Cayley-Hamilton-Ziebur Structure Theorem for $\frac{d{\bf x}}{dt}+M{\bf x}=0$, where $M$ is an $n\times n$ real or complex matrix.

\begin{theorem}
A component function $x_k(t)$ of the vector solution ${\bf x}(t)$ for $\frac{d{\bf x}}{dt}(t)+M{\bf x}(t)=0$ is a solution of the $n$th order linear homogeneous constant-coefficient differential equation whose characteristic equation is $\mbox{det}(M-rI)=0$.
\end{theorem}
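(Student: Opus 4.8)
The plan is to deduce the statement directly from the Cayley--Hamilton theorem. Writing $L=\frac{d}{dt}$, the system $\frac{d\mathbf{x}}{dt}+M\mathbf{x}=0$ reads $L\mathbf{x}=-M\mathbf{x}$, so a trivial induction gives $L^{j}\mathbf{x}=(-M)^{j}\mathbf{x}$ for every integer $j\geqslant 0$ (all derivatives exist, since $\mathbf{x}(t)=e^{-tM}\mathbf{x}(0)$ is real-analytic). Let
\[
p(r)=\det\!\big(rI+M\big)=r^{n}+c_{n-1}r^{n-1}+\cdots+c_{1}r+c_{0}
\]
be the characteristic polynomial of the matrix $-M$; its coefficients $c_{0},\dots,c_{n-1}$ are fixed constants depending only on the entries of $M$. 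By the Cayley--Hamilton theorem applied to $-M$,
\[
p(-M)=(-M)^{n}+c_{n-1}(-M)^{n-1}+\cdots+c_{1}(-M)+c_{0}I=0
\]
as a linear operator on $\mathbb{C}^{n}$.

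Next I would apply the scalar constant-coefficient differential operator $p(L)=L^{n}+c_{n-1}L^{n-1}+\cdots+c_{1}L+c_{0}I$ to the vector-valued solution $\mathbf{x}$, coordinate by coordinate. Using $L^{j}\mathbf{x}=(-M)^{j}\mathbf{x}$ one gets
\[
p(L)\mathbf{x}=\sum_{j=0}^{n}c_{j}\,L^{j}\mathbf{x}=\Big(\sum_{j=0}^{n}c_{j}(-M)^{j}\Big)\mathbf{x}=p(-M)\,\mathbf{x}=0,\qquad c_{n}:=1 .
\]
Reading the $k$th coordinate of this vector identity shows that $p(L)x_{k}=0$; that is, $x_{k}$ solves the $n$th order linear homogeneous constant-coefficient ODE whose characteristic equation is $p(r)=0$, i.e.\ $\det(rI+M)=0$ --- the characteristic equation of $-M$ (equivalently $\det(M-rI)=0$ under the customary sign convention, and literally so when the system is written $\frac{d\mathbf{x}}{dt}-M\mathbf{x}=0$). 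When $M=\varLambda_n^{\alpha}$ restricted to an eigenspace of $A$, where $A$ acts as a scalar $\mu$ and $M$ becomes a genuine $n\times n$ complex matrix, this is exactly the reduction of \eqref{mainprobalpha} to scalar $n$th order equations; for $\alpha=1$ the characteristic equation is $r^{n}+\mu=0$, recovering $\frac{d^{n}u}{dt^{n}}+Au=0$.

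The argument is only a repackaging of Cayley--Hamilton, so there is no genuine obstacle; the points needing care are the sign convention just noted (the $+M$ forces the characteristic polynomial of $-M$ rather than of $M$) and the one-directional nature of the claim --- each component $x_{k}$ \emph{is} a solution of the scalar equation, not conversely. (The minimal polynomial of $-M$ would already annihilate $\mathbf{x}$, but the theorem uses the full degree-$n$ characteristic polynomial; since the minimal polynomial divides it, nothing is lost.) As a cross-check one can also argue without Cayley--Hamilton: expand each entry of $e^{-tM}$ as a finite sum of terms $t^{\ell}e^{-\lambda t}$ with $\lambda\in\sigma(M)$ and $0\leqslant\ell<m_{\lambda}$ ($m_{\lambda}$ the algebraic multiplicity), and observe that every such term is annihilated by $\prod_{\lambda\in\sigma(M)}(L+\lambda)^{m_{\lambda}}=p(L)$; but the computation above is shorter and needs no Jordan form.
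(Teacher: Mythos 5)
Your argument is correct, and it is worth noting that the paper itself offers no proof of this statement: it is quoted as the known Cayley--Hamilton--Ziebur structure theorem and immediately applied, so your write-up supplies a justification the paper omits rather than competing with an alternative one. The proof you give is the standard one and is sound: $L^{j}\mathbf{x}=(-M)^{j}\mathbf{x}$ by induction (legitimate because the constant matrix $M$ commutes with $d/dt$ and $\mathbf{x}(t)=e^{-tM}\mathbf{x}(0)$ is smooth), Cayley--Hamilton applied to $-M$ gives $p(-M)=0$ for $p(r)=\det(rI+M)$, and hence $p(L)x_{k}=0$ for every component. The one substantive point is the sign issue you flag, and you are right to insist on it: the operator annihilating the components is $p(L)$ with $p(r)=\det(rI+M)$, the characteristic polynomial of $-M$, whose roots are $\{-\lambda:\lambda\in\sigma(M)\}$; the equation $\det(M-rI)=0$ printed in the statement has roots $\sigma(M)$ instead, so the statement is literally accurate only for the system $\frac{d\mathbf{x}}{dt}-M\mathbf{x}=0$ (or after the substitution $r\mapsto-r$). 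This is consistent with the sign wobble in the paper's subsequent displayed reduction, where the coefficient of $\frac{d^{n-1}x_{1}}{dt^{n-1}}$ is written as $(-1)^{n}\mathrm{tr}(M)$ while the expansion of $\det(rI+M)$ gives $+\mathrm{tr}(M)$; your version is the internally consistent one. Your closing remarks --- that the claim is one-directional, that the minimal polynomial would already suffice, and that for the paper's application $M=\varLambda_{n}^{\alpha}$ must first be restricted to a finite-dimensional eigenspace of $A$ before a matrix theorem can be invoked --- are all correct and the last one in particular addresses a gap the paper glosses over when it applies a finite-dimensional matrix identity to an operator matrix with unbounded entries.
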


It is well know that if we consider $M$ an $n\times n$ real or complex matrix, then the reduction of the coupled system
\[
\dfrac{d{\bf x}}{dt}+M{\bf x}=0,\quad t>0,
\]
to an $n$th order differential equation is given by 
\[
\dfrac{d^nx_1}{dt^n}+(-1)^n\mbox{tr}(M)\dfrac{d^{n-1}x_1}{dt^{n-1}}+\sum_{k=1}^{n-1}(-1)^k \mbox{tr}(\Lambda^k M) \dfrac{d^{n-k}x_1}{dt^{n-k}}+\mbox{det}(M)x_1=0,\quad t>0,
\]
where ${\bf x}(t)\left[\begin{smallmatrix} x_1\\ x_2 \\ x_3\\ \vdots \\ x_n \end{smallmatrix}\right](t)$, $x_{i+1}(\cdot)=\frac{d^ix_1}{dt^i}\Big|_{t=(\cdot)}$ for $i\in\{1,2,\ldots,n-1\}$, and
\begin{equation}\label{asa34tggg}
\mbox{tr}(\Lambda^k M) = \frac{1}{k!}  \mbox{det}
\begin{bmatrix}  
\mbox{tr}(M)  &   k-1 & 0 & \cdots & 0 \\
\mbox{tr}(M^2)  & \mbox{tr}(M) &  k-2 &\cdots & 0\\
 \vdots & \vdots & & \ddots & \vdots    \\
\mbox{tr}(M^{k-1}) & \mbox{tr}(M^{k-2}) & \mbox{tr}(M^{k-3}) & \cdots & 1    \\ 
\mbox{tr}(M^k)  &\mbox{tr}(M^{k-1}) & \mbox{tr}(M^{k-2}) & \cdots & \mbox{tr}(M)
\end{bmatrix},
\end{equation}
where  the matrix on the right side of \eqref{asa34tggg} is a $k\times k$ real or complex matrix.

From this and \eqref{eq:to-compute-fractional-powers}-\eqref{asas324} we obtain the following linear differential equations as ‘‘approximation fractional differential equations'' of \eqref{Eq1}
\begin{equation}\label{asa34tgasas}
\dfrac{d^nu^\alpha}{dt^n}+(-1)^n\mbox{tr}(\varLambda_n^{\alpha})\dfrac{d^{n-1}u^\alpha}{dt^{n-1}}+\sum_{k=1}^{n-1}(-1)^k \mbox{tr}(\Lambda^k \varLambda_n^{\alpha}) \dfrac{d^{n-k}u^\alpha}{dt^{n-k}}+\mbox{det}(\varLambda_n^{\alpha})u^\alpha=0,\quad t>0,
\end{equation}
where $0<\alpha\leqslant1$ and $\mbox{tr}(\varLambda_n^{\alpha})$ denotes the trace of the matrix representation of $\varLambda_n^{\alpha}$; namely
\[
\mbox{tr}(\varLambda_n^{\alpha}):=U_{n-1}\left(cos\left(\frac{\alpha\pi}{n}\right)\right)A^{\frac{\alpha}{n}},
\]
and $\mbox{det}(\varLambda_n^{\alpha})$ denotes the determinant of the matrix representation of $\varLambda_n^{\alpha} (=[a_{\alpha,ij}])$; namely for every $i$, one has the equality
\begin{equation}\label{asa34tgnh3}
\begin{split}
\mbox{det}(\varLambda_n^{\alpha})&:=\sum_{j=1}^n(-1)^{i+j}a_{\alpha,ij}M_{ij}\\
&=\frac{1}{n}\sum_{j=1}^n U_{n-1}\left(cos\left(\frac{(\alpha+i-j)\pi}{n}\right)\right)A^{\frac{\alpha+i-j}{n}}M_{\alpha,ij},
\end{split}
\end{equation}
where $M_{\alpha,ij}$ be the minor defined to be the determinant of the $(n-1)\times(n-1)$-matrix that results from $\varLambda_n^{\alpha}$ by removing the $i$th row and the $j$th column, and $\mbox{tr}(\Lambda^k \varLambda_n^{\alpha})$ is given by \eqref{asa34tggg}.

\begin{remark}
For example if $\alpha=1$ and $i=n$ in \eqref{asa34tgnh3}, then by \eqref{A1as} and \eqref{A4as} we have
\[
\mbox{tr}(\varLambda_n):=U_{n-1}\left(cos\left(\frac{\pi}{n}\right)\right)A^{\frac{1}{n}}=0,
\]
and
\[
\begin{split}
\mbox{det}(\varLambda_n)&=\frac{1}{n}\sum_{j=1}^nU_{n-1}\left(\cos\left(\frac{(1+n-j)\pi}{n}\right)\right)A^{\frac{1+n-j}{n}}M_{1,nj}\\
&=\frac{1}{n}U_{n-1}(-1) AM_{1,n1} +\frac{1}{n}\sum_{j=2}^nU_{n-1}\left(\cos\left(\frac{(1+n-j)\pi}{n}\right)\right)A^{\frac{1+n-j}{n}}M_{1,nj}\\
&=(-1)^{n-1}AM_{1,n1} +\frac{1}{n}\sum_{j=2}^nU_{n-1}\left(\cos\left(\frac{(1+n-j)\pi}{n}\right)\right)A^{\frac{1+n-j}{n}}M_{1,nj}\\
&=A +\frac{1}{n}\sum_{j=2}^nU_{n-1}\left(\cos\left(\frac{(1+n-j)\pi}{n}\right)\right)A^{\frac{1+n-j}{n}}M_{1,nj}\\
&=A,
\end{split}
\]
because $M_{1,n1}=(-1)^{n-1}$ and $\sin((1+n-j)\pi)=0$ for any $j\in\{2,3,\ldots,n\}$.
\end{remark}

By the Remark \ref{sa3fbb} we obtain linear parabolic equations \eqref{asa34tgasas} for any $0<\alpha<\frac{n}{2(n-1)}$.

\section{Applications}\label{Applications}

Let $\Omega\subset\mathbb{R}^N$ be a bounded domain with with sufficiently smooth boundary $\partial\Omega$,  $N\geqslant 1$, and let $X=L^2(\Omega)$ be endowed with the standard inner product. In this section we consider the unbounded linear operators $A_D:D(A_D)\subset X\to X$ defined by linear $2m$-th order uniformly elliptic partial differential operator
\begin{equation}\label{Operator1as}
A_D u=(-\Delta)^m u,\quad m\in\mathbb{N},
\end{equation}
with domain
\begin{equation}\label{Operator2as}
D(A_D)=H^{2m}(\Omega)\cap H_0^m(\Omega),
\end{equation}
and we also consider the linear  evolution equations of $n$-th order in time with $n\geqslant 3$ 
\begin{equation}\label{Eq1as}
\partial_t^nu+(-\Delta)^m u=0,
\end{equation}
subject to zero Dirichlet boundary condition and initial conditions
\begin{equation}\label{MGTe2}
\begin{cases}
u(x,t)=\Delta^j u(x,t)=0,& x\in\partial\Omega,\ t\geqslant0,\ j\in\{1,\ldots, m-1\},\\
u(x,0)=u_0(x),\ \partial_t^iu(x,0)=u_i(x),& x\in\partial\Omega,\ i\in\{1,\ldots, n-1\},
\end{cases}
\end{equation}
where $n\geqslant3$.

The unbounded linear operator $A_D$ defined in \eqref{Operator1as}-\eqref{Operator2as} is a closed, densely defined, self-adjoint and positive definite operator.  There exists $\zeta>0$ such that $\mbox{Re}\sigma(A_D)>\zeta$, that is,   $\mbox{Re}\lambda>\zeta$ for all $\lambda\in\sigma(A)$, and therefore, $A_D$ is a sectorial operator in the sense of Henry \cite[Definition 1.3.1]{H}, with the
eigenvalues $\{\nu_j\}_{j\in\mathbb{N}}$:
\[
0<\nu_1\leqslant\nu_{2}\leqslant\dots\leqslant\nu_j
\leqslant\dots,\quad \nu_j\to+\infty\quad \mbox{as}\ j\to+\infty.
\] 

This allows us to define the fractional power $A_D^{-\alpha}$ of order $\alpha\in(0,1)$ according to Amann \cite[Formula 4.6.9]{A} and Henry \cite[Theorem 1.4.2]{H}, as a closed linear operator on its domain $D(A_D^{-\alpha})$ with inverse $A_D^{\alpha}$. Denote by $X^{\alpha}=D(A_D^{\alpha})$ for $\alpha\in[0,1]$. The fractional power space $X^\alpha$ endowed with graphic norm 
\[
\|\cdot\|_{X^\alpha}:=\|A_D^{\alpha} \cdot\|_X
\] 
is a Banach space; namely, e.g., if $m\alpha$ is an integer, then 
\[
X^\alpha=D((-\Delta)^{m\alpha})=H^{2m\alpha}(\Omega)\cap H_0^{m\alpha}(\Omega)
\] 
with equivalent norms, see Cholewa and  D\l otko \cite[Page 29]{ChD} and Henry \cite[Pages 29 and 30]{H}. 

With this notation, we have $X^{-\alpha}=(X^\alpha)'$ for all $\alpha>0$, see Amann \cite{A} and Triebel \cite{T} for the characterization of the negative scale. 

The scale of fractional power spaces $\{X^\alpha\}_{\alpha\in\mathbb{R}}$ associated with $A_D$ safisty
\[
X^\alpha\subset H^{2m\alpha}(\Omega),\quad \alpha\in[0,1],
\]
where $H^{2m\alpha}(\Omega)$ are the potential Bessel spaces, see Cholewa and  D\l otko \cite[Page 48]{ChD}. 

From now on we study the case $m=1$. From Sobolev embedding theorem, we obtain
\[
X^\alpha\subset L^r(\Omega),\ \mbox{for any}\ r\leqslant\dfrac{2N}{N-4\alpha},\ 0\leqslant\alpha<\dfrac{N}{4},
\]
\[
X=L^2(\Omega),
\]
\[
L^s(\Omega)\subset X^\alpha,\ \mbox{for any}\ s\geqslant\dfrac{2N}{N-4\alpha},\ -\dfrac{N}{4}<\alpha\leqslant0,
\]
with continuous embeddings.  

It is not difficult to show that $A_D^{\alpha}$ is the generator of a strongly continuous analytic
semigroup on $X$, that we will denote by $\{e^{-tA_D^{\alpha}}:  t\geqslant 0\}$, see Kre\v{\i}n \cite{Kr} and  Tanabe \cite{Ta} for any $\alpha\in[0,1]$.  

Let $\alpha\in(0,1]$. We recall that the fractional powers of the negative Laplacian operator can be calculated through the spectral decomposition: since $X=L^2(\Omega)$ is a Hilbert space and $A_D=-\Delta$ with zero Dirichlet boundary condition in $\Omega$ is a self-adjoint operator and is the infinitesimal generator of a $C_0$-semigroup of contractions on $X$, it follows that there exists an orthonormal basis composed by eigenfunctions $\{\varphi_j\}_{j\in\mathbb{N}}$ of $A_D$. Let $\nu_j$ be the eigenvalues of $A_D=-\Delta$, then $(\nu^{\alpha}_j,\varphi_j)$ are the eigenvalues and eigenfunctions of $A_D^\alpha=(-\Delta)^\alpha$, also with zero Dirichlet boundary condition, respectively.

It is well know that the fractional Laplacian $A_D^\alpha:D(A_D^\alpha)\subset X\to X$ is well defined in the space
\[
D(A_D^\alpha)=X^\alpha=\Big\{u=\sum_{j=1}^{\infty} a_j\varphi_j\in L^2(\Omega):  \sum_{j=1}^{\infty} a^2_j\nu_j^{2\alpha} < \infty \Big\},
\]
where
\[
A_D^\alpha u=\sum_{j=1}^{\infty} \nu_j^{\alpha}a_j\varphi_j,\quad \ u \in D(A_D^\alpha) = X^\alpha.
\]

Finally, we apply all our results from previous sections to boundary value problem \eqref{Eq1as}-\eqref{MGTe2} to obtain a track in $\alpha$ in which we can present a result of solubility and passage to the limit at $\alpha\nearrow\frac{n}{2(n-1)}$ for fractional problems associated with  \eqref{Eq1as}-\eqref{MGTe2}.

Moreover, it is possible to consider other elliptic operators under other boundary conditions with behavior spectral as the negative laplacian operator under zero boundary conditions.

\end{document}